\def\pmod #1{\ ({\rm{mod}}\ #1)}
\def\Z{\Bbb Z}
\def\l{\left}
\def\r{\right}
\def\bg{\bigg}
\def\({\bg(}
\def\){\bg)}
\def\t{\text}
\def\f{\frac}
\def\mo{{\rm{mod}\ }}
\def\ls{\leqslant}
\def\gs{\geqslant}
\def\sm{\setminus}
\def\bi{\binom}
\def\al{\alpha}
\def\eq{\equiv}
\def\da{\delta}
\def\Proof{\noindent{\it Proof}}
\theoremstyle{plain}
\newtheorem{theorem}{Theorem}
\newtheorem{lemma}{Lemma}
\newtheorem{corollary}{Corollary}
\theoremstyle{definition}
\theoremstyle{remark}
\newtheorem{remark}{Remark}
\begin{document}

\hbox{Finite Fields Appl. 64 (2020), Article 101672.}
\medskip

\title
[{On some determinants involving Jacobi symbols}]
{On some determinants involving Jacobi symbols}

\author
[D. Krachun, F. Petrov, Z.-W. Sun, M. Vsemirnov] {Dmitry Krachun, Fedor Petrov,  Zhi-Wei Sun, Maxim Vsemirnov}

\address {(Dmitry Krachun) St. Petersburg Department of Steklov Mathematical Institute of Russian Academy of Sciences, Fontanka 27, 191023, St. Petersburg, Russia}
\email{dmitrykrachun@gmail.com}

\address {(Fedor Petrov) St. Petersburg Department of Steklov Mathematical Institute of Russian Academy of Sciences, Fontanka 27, 191023, St. Petersburg, Russia}
\email{fedyapetrov@gmail.com}

\address{(Zhi-Wei Sun) Department of Mathematics, Nanjing
University, Nanjing 210093, People's Republic of China}
\email{zwsun@nju.edu.cn}

\address {(Maxim Vsemirnov) St. Petersburg Department of Steklov Mathematical Institute of Russian Academy of Sciences, Fontanka 27, 191023, St. Petersburg, Russia}
\email{vsemir@pdmi.ras.ru}

\keywords{Determinants, Jacobi symbols, character sums over finite fields.
\newline \indent 2020 {\it Mathematics Subject Classification}. Primary 11C20, 11T24; Secondary 11E16, 15A15.
\newline \indent The work is supported by the NSFC (Natural Science Foundation of China)-RFBR (Russian Foundation for Basic Research) Cooperation and Exchange Program (grants NSFC 11811530072 and
RFBR 18-51-53020-GFEN-a). The third author is also supported
by the Natural Science Foundation of China (grant 11971222).}

\begin{abstract}
In this paper we study some conjectures on determinants with Jacobi symbol entries posed by Z.-W. Sun. For any positive integer $n\equiv3\pmod4$, we show that
$$(6,1)_n=[6,1]_n=(3,2)_n=[3,2]_n=0$$
and
$$(4,2)_n=(8,8)_n=(3,3)_n=(21,112)_n=0$$
as conjectured by Sun, where
$$(c,d)_n=\bigg|\left(\frac{i^2+cij+dj^2}n\right)\bigg|_{1\ls i,j\ls n-1}$$
and $$[c,d]_n=\bigg|\left(\frac{i^2+cij+dj^2}n\right)\bigg|_{0\ls i,j\ls n-1}$$
with $(\frac{\cdot}n)$ the Jacobi symbol.
We also prove that $(10,9)_p=0$ for any prime $p\equiv5\pmod{12}$, and $[5,5]_p=0$ for any prime $p\equiv 13,17\pmod{20}$, which were also conjectured by Sun. Our proofs involve character sums over finite fields.
\end{abstract}
\maketitle

\section{Introduction}
\setcounter{lemma}{0}
\setcounter{theorem}{0}
\setcounter{corollary}{0}
\setcounter{remark}{0}
\setcounter{equation}{0}

For an $n\times n$ matrix $[a_{ij}]_{1\ls i,j\ls n}$ over a field, we simply denote its determinant by $|a_{ij}|_{1\ls i,j\ls n}$. In this paper we study some conjectures on determinants with Jacobi symbol entries posed by Z.-W. Sun \cite{S19}.

Let $p$ be an odd prime. In 2004, R. Chapman \cite{Ch} determined the values of
$$\bg|\l(\f{i+j-1}p\r)\bg|_{1\ls i,j\ls(p-1)/2}=\l(\f{-1}p\r)\bg|\l(\f{i+j}p\r)\bg|_{1\ls i,j\ls(p-1)/2}$$
and
$$\bg|\l(\f{i+j-1}p\r)\bg|_{1\ls i,j\ls(p+1)/2}
=\bg|\l(\f{i+j}p\r)\bg|_{0\ls i,j\ls(p-1)/2},$$
where $(\f {\cdot}p)$ denotes the Legendre symbol.
Chapman's conjecture on the evaluation of
$$\bg|\l(\f{j-i}p\r)\bg|_{0\ls i,j\ls(p-1)/2}$$
was confirmed by M. Vsemirnov \cite{V12, V13} via matrix decomposition.
With this background, Z.-W. Sun \cite{S19} studied some new kinds of determinants with Legendre symbol or Jacobi symbol entries.

For any odd integer $n>1$ and integers $c$ and $d$, Sun \cite{S19} introduced the notations
\begin{equation}\label{1.1}(c,d)_n:=\bigg|\l(\f{i^2+cij+dj^2}n\r)\bigg|_{1\ls i,j\ls n-1}\end{equation}
and
\begin{equation}\label{1.2}[c,d]_n:=\bigg|\l(\f{i^2+cij+dj^2}n\r)\bigg|_{0\ls i,j\ls n-1},\end{equation}
where $(\frac{\cdot}n)$ denotes the Jacobi symbol. He showed that
\begin{equation}\label{cd1}\l(\f dn\r)=-1\Rightarrow (c,d)_n=0,
\end{equation}
and that for any odd prime $p$ we have
\begin{equation}\label{cd2}\l(\f dp\r)=1\Rightarrow [c,d]_p=\begin{cases}\f{p-1}2(c,d)_p&\t{if}\ p\nmid c^2-4d,\\\f{1-p}{p-2}(c,d)_p&\t{if}\ p\mid c^2-4d.
\end{cases}
\end{equation}

For $a\in\Z$ and $n\in\Z^+=\{1,2,3,\ldots\}$ , if $a$ is relatively prime to $n$ and $x^2\eq a\pmod n$
for some $x\in\Z$, then $a$ is called a {\it quadratic residue} modulo $n$. If $n$ is odd and $a$ is a quadratic residue modulo $n$, then $(\f an)=1$ since $a$ is a quadratic residue modulo any prime divisor of $n$.

Now we state our first theorem.

\begin{theorem}\label{Th1.1} Let $n>1$ be an odd integer.

{\rm (i)} If $-1$ is not a quadratic residue modulo $n$, then
$$(6,1)_n=(3,2)_n=0\ \ \t{and}\ \ [6,1]_n=[3,2]_n=0.$$

{\rm (ii)} If $-2$ is not a quadratic residue modulo $n$, then
$$(4,2)_n=(8,8)_n=0\ \ \t{and}\ \ [4,2]_n=[8,8]_n=0.$$

{\rm (iii)} If $-3$ is not a quadratic residue modulo $n$, then
$$(3,3)_n=(6,-3)_n=0\ \ \t{and}\ \ [3,3]_n=[6,-3]_n=0.$$

{\rm (iv)} If $-7$ is not a quadratic residue modulo $n$, then
$$(21,112)_n=(42,-7)_n=0\ \ \t{and}\ \ [21,112]_n=[42,-7]_n=0.$$
\end{theorem}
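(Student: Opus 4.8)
The plan is to treat separately the cases of non-squarefree and squarefree $n$, since the difficulty is concentrated entirely in the latter. First I would dispose of the non-squarefree case by a rank argument. Suppose $n$ is not squarefree, say $p^{2}\mid n$, and let $e\ge2$ be maximal with $p^{e}\mid n$; write $n=p^{e}m$ with $p\nmid m$. For every integer $Q$ one has $\left(\frac{Q}{p^{e}}\right)=\left(\frac{Q}{p}\right)^{e}$, which depends only on $Q\bmod p$; hence each entry $\left(\frac{i^{2}+cij+dj^{2}}{n}\right)=\left(\frac{i^{2}+cij+dj^{2}}{p^{e}}\right)\left(\frac{i^{2}+cij+dj^{2}}{m}\right)$ depends on the row index $i$ only through $i\bmod p$ and $i\bmod m$, i.e.\ only through $i\bmod(n/p^{e-1})$. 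Since $n/p^{e-1}\le n/p<n-1$, there are at most $n/p^{e-1}$ distinct rows among the $n-1$ (respectively $n$) rows, so two rows coincide and the determinant vanishes. This argument is insensitive to the form and to the hypothesis, and applies verbatim to both $(c,d)_{n}$ and $[c,d]_{n}$; thus I may assume $n$ squarefree.

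For squarefree $n$ I would exhibit an explicit null vector. Take $v=\left(\left(\frac{j}{n}\right)\right)_{1\le j\le n-1}$ for $(c,d)_{n}$, and the same vector bordered by $v_{0}=0$ for $[c,d]_{n}$; note $v\ne0$ since $\left(\frac{j}{n}\right)=1$ whenever $\gcd(j,n)=1$. The $i$-th coordinate of the product is $S_{i}:=\sum_{j=1}^{n-1}\left(\frac{i^{2}+cij+dj^{2}}{n}\right)\left(\frac{j}{n}\right)=\sum_{j\bmod n}\left(\frac{j(i^{2}+cij+dj^{2})}{n}\right)$, the $j=0$ term contributing nothing. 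Writing $n=\prod_{p\mid n}p$ and using the Chinese Remainder Theorem together with multiplicativity of the Jacobi symbol, this factors as $S_{i}=\prod_{p\mid n}T_{i}(p)$ with $T_{i}(p)=\sum_{s\bmod p}\left(\frac{s(i^{2}+cis+ds^{2})}{p}\right)$. For $i$ a unit modulo $p$ the substitution $s\mapsto is$ gives $T_{i}(p)=\left(\frac{i}{p}\right)U_{p}$, where $U_{p}:=\sum_{s\bmod p}\left(\frac{s^{3}+cs^{2}+ds}{p}\right)$, while $T_{i}(p)=0$ whenever $p\mid i$; in all cases $T_{i}(p)=\left(\frac{i}{p}\right)U_{p}$. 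Hence $S_{i}=\left(\frac{i}{n}\right)\prod_{p\mid n}U_{p}$, and the whole product $Av$ (respectively $Bv$) is identically zero as soon as $U_{p}=0$ for a single prime $p\mid n$. For $[c,d]_{n}$ the extra row $i=0$ also vanishes, since it equals $\left(\frac{d}{n}\right)\sum_{\gcd(j,n)=1}\left(\frac{j}{n}\right)=0$ for $n>1$. It therefore suffices to produce, under each hypothesis, a prime $p\mid n$ with $U_{p}=0$.

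This is where the hypothesis enters and where the real work lies. Since ``$-k$ is not a quadratic residue modulo $n$'' forces a prime $p\mid n$ for which $-k$ is a non-residue (or $p\mid k$), the theorem reduces to the following key lemma: for the relevant form and $k\in\{1,2,3,7\}$, one has $U_{p}=0$ whenever $\left(\frac{-k}{p}\right)\ne1$. I would prove this by recognizing $U_{p}$ as (minus) the trace of Frobenius on the elliptic curve $E\colon y^{2}=x^{3}+cx^{2}+dx$ and checking that $E$ has complex multiplication by an order in $\Q(\sqrt{-k})$, so that $E$ is supersingular --- equivalently $U_{p}=0$ --- exactly when $p$ is non-split in $\Q(\sqrt{-k})$, i.e.\ when $\left(\frac{-k}{p}\right)\ne1$. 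Concretely, depressing the cubic makes the complex multiplication visible: for $(3,2)$ the shift $x\mapsto x-1$ turns $x^{3}+3x^{2}+2x$ into $x^{3}-x$, whence $s\mapsto-s$ yields $U_{p}=\left(\frac{-1}{p}\right)U_{p}$ and thus $U_{p}=0$ for $p\equiv3\pmod4$; similarly $(3,3)$ becomes $y^{2}=x^{3}-1$, and each of the four pairs collapses to a curve with CM by one of the class-number-one fields $\Q(i),\Q(\sqrt{-2}),\Q(\sqrt{-3}),\Q(\sqrt{-7})$. The main obstacle is precisely this step: verifying the CM --- or, equivalently, evaluating the Jacobsthal-type character sum $U_{p}$ --- uniformly for all eight forms, including the one form in each pair whose cubic is merely isogenous, rather than isomorphic, to the standard CM model. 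This is the part that genuinely requires character sums over finite fields.
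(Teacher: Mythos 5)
Your argument has the same determinant-theoretic skeleton as the paper's: the non-squarefree case is disposed of by producing repeated rows (the paper uses rows $i$ and $i+n/p_t$ when $p_t^2\mid n$; your pigeonhole on $i\bmod pm$ is the same idea), and for squarefree $n$ the paper uses exactly your kernel vector $v_j=(\frac{j}{n})$, bordered by $v_0=0$ for $[c,d]_n$, together with the same CRT factorization of $\sum_j(\frac{j}{n})(\frac{i^2+cij+dj^2}{n})$ into prime-level sums (their equation (2.3)). (Minor slip: $(\frac{j}{n})$ is $\pm1$, not $1$, for $\gcd(j,n)=1$; of course $v\neq0$ anyway since $(\frac{1}{n})=1$.) The genuine divergence is in the key lemma. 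The paper proves the prime-level vanishing only for the four forms $(3,2),(4,2),(3,3),(21,112)$ --- for $(4,2)$ via a congruence reducing the sum to $\sum_k\binom{4k}{2k}\binom{2k}{k}(d/16c^2)^k$ combined with Z.-H.~Sun's evaluation of $\sum_k\binom{4k}{2k}\binom{2k}{k}/128^k\pmod{p^2}$, and for the other three via classical evaluations of $\sum_x(\frac{x^3+ax+b}{p})$ due to Berndt--Evans--Williams and Rajwade --- and then transfers to the partner forms $(6,1),(8,8),(6,-3),(42,-7)$ through the identity of Theorem 1.2, itself proved by another supercongruence. You instead treat all eight forms uniformly by CM and Deuring's criterion, which renders Theorem 1.2 unnecessary for this purpose; your CM identifications are in fact correct (the $j$-invariants are $1728$ for $(3,2)$, $66^3$ for $(6,1)$, $8000$ for both $(4,2)$ and $(8,8)$ --- the latter being the quadratic twist by $2$ of the former --- $0$ for $(3,3)$, $54000$ for $(6,-3)$, $-15^3$ for $(21,112)$, and $255^3$ for $(42,-7)$). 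The paper's route buys elementarity and exact values (Lemma 3.3 gives $U_p=\pm2x$ with $p=x^2+ky^2$ in the split case, not merely vanishing); yours buys uniformity and conceptual clarity at the price of heavier machinery.

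Two gaps need patching, both minor but real. First, the chain ``non-split $\Rightarrow$ supersingular $\Rightarrow U_p=0$'' is valid only at primes $p\geq5$ of good reduction: Deuring's criterion presupposes good reduction, and over $\F_3$ supersingularity forces only $a_p\in\{0,\pm3\}$ (for instance $y^2=x^3-x+1$ is supersingular over $\F_3$ with trace $-3$), so Hasse's bound no longer pins the trace to zero. Hence $p=3$ (non-split for $k=1,7$, ramified for $k=3$) and the bad-reduction primes ($p=3$ for $(3,3),(6,-3)$; $p=7$ for $(21,112),(42,-7)$) require direct verification. This is easy --- modulo such $p$ the cubic degenerates, e.g.\ $s^3+21s^2+112s\equiv s^3\pmod 7$ gives $U_7=\sum_s(\frac{s}{7})=0$ --- and the paper performs exactly this kind of check for $p=3$, but your outline silently skips it. Second, the CM property of the four non-standard models (which have CM by non-maximal orders) is asserted rather than verified; as you acknowledge yourself, this verification (or the equivalent character-sum evaluation) is where the substance lies, so as written the proposal is a correct and genuinely different strategy, but an outline rather than a complete proof.
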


Combining Theorem 1.1 with \eqref{cd1}, we immediately obtain the following consequence
which was conjectured by Sun \cite[Conjecture 4.8(ii)]{S19}.
\begin{corollary}\label{Rem1.1} For any positive integer $n\eq3\pmod4$, we have
$$(6,1)_n=[6,1]_n=(3,2)_n=[3,2]_n=0$$
and
$$(4,2)_n=(8,8)_n=(3,3)_n=(21,112)_n=0.$$
\end{corollary}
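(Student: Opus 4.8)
The plan is to deduce the corollary purely formally from Theorem~\ref{Th1.1} and the implication \eqref{cd1}, exploiting that the hypothesis $n\eq3\pmod4$ pins down the Jacobi symbol of $-1$. First I would record the basic fact that for such $n$ one has $\l(\f{-1}n\r)=(-1)^{(n-1)/2}=-1$; since every quadratic residue modulo an odd number has Jacobi symbol $1$, this means $-1$ is \emph{not} a quadratic residue modulo $n$. Thus Theorem~\ref{Th1.1}(i) applies unconditionally and yields $(6,1)_n=[6,1]_n=(3,2)_n=[3,2]_n=0$, which settles the entire first displayed line.

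For the second line I would treat each of $(4,2)_n,(8,8)_n,(3,3)_n,(21,112)_n$ by a short case split governed by a single Jacobi symbol. The preliminary observation is that, the Jacobi symbol being completely multiplicative and squares of integers coprime to $n$ contributing $1$, the relevant ``$d$''-symbols collapse to those of small primes: $\l(\f2n\r)$ for $d=2$; $\l(\f8n\r)=\l(\f2n\r)$ for $d=8$ (since $8=2^3$); $\l(\f3n\r)$ for $d=3$; and $\l(\f{112}n\r)=\l(\f7n\r)$ for $d=112$ (since $112=2^4\cdot7$). So the ``base primes'' attached to the four determinants are $2,2,3,7$ respectively, and in each case $\l(\f dn\r)=\l(\f mn\r)$.

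The dichotomy then runs uniformly; I take $(3,3)_n$, with base prime $m=3$, as the model. If $\l(\f3n\r)=-1$, then $\l(\f dn\r)=-1$ with $d=3$, so \eqref{cd1} forces $(3,3)_n=0$. If instead $\l(\f3n\r)\neq-1$, that is $\l(\f3n\r)\in\{0,1\}$, then $-3$ cannot be a quadratic residue modulo $n$, for a residue would give $\l(\f{-3}n\r)=1$ whereas here $\l(\f{-3}n\r)=\l(\f{-1}n\r)\l(\f3n\r)=-\l(\f3n\r)\in\{0,-1\}$; consequently Theorem~\ref{Th1.1}(iii) gives $(3,3)_n=0$. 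The identical two-case argument---using \eqref{cd1} when the base symbol equals $-1$, and part (ii), (iii) or (iv) of Theorem~\ref{Th1.1} (via $\l(\f{-m}n\r)=-\l(\f mn\r)\neq1$) otherwise---disposes of $(4,2)_n$, $(8,8)_n$ and $(21,112)_n$.

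I do not expect a real obstacle, this being a corollary of results already established; but two points deserve care. The first is the degenerate subcase $\l(\f mn\r)=0$, which genuinely occurs for $m=3$ or $m=7$ when $3\mid n$ or $7\mid n$: there \eqref{cd1} is unavailable, yet $-m$ fails to be coprime to $n$ and so is not a quadratic residue, leaving Theorem~\ref{Th1.1} in force. The second, which explains why the second line of the corollary asserts vanishing only for the parenthesized determinants $(c,d)_n$ and not the bracketed $[c,d]_n$, is that the subcase invoking \eqref{cd1} speaks only about $(c,d)_n$; in the first line, by contrast, $-1$ is a nonresidue outright, so Theorem~\ref{Th1.1}(i) delivers both forms at once.
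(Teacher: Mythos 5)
Your proposal is correct and is exactly the argument the paper intends: the paper proves this corollary in one line by ``combining Theorem~\ref{Th1.1} with \eqref{cd1},'' and your case split---using \eqref{cd1} when $\l(\f dn\r)=-1$ and the appropriate part of Theorem~\ref{Th1.1} when $\l(\f{-m}n\r)\neq 1$ (including the degenerate cases $3\mid n$, $7\mid n$)---is precisely the detail that makes that combination work. Your closing observation about why the second displayed line omits the bracketed determinants $[c,d]_n$ is also accurate.
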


Actually we deduce Theorem 1.1 from the following theorems.

\begin{theorem}\label{Th1.2} Let $n$ be a positive odd integer which is squarefree. For any $c,d,i\in\Z$, we have
\begin{equation}\label{1.3}\sum_{j=0}^{n-1}\l(\f jn\r)\l(\f{i^2+cij+dj^2}n\r)
=\sum_{j=0}^{n-1}\l(\f{-j}n\r)\l(\f{i^2+2cij+(c^2-4d)j^2}n\r).
\end{equation}
\end{theorem}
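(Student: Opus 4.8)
The plan is to prove the claimed identity between two character sums by a change of variables in the index $j$, exploiting the multiplicativity of the Jacobi symbol and completing the square in the quadratic form $i^2+cij+dj^2$. The key observation is that the right-hand side has the quadratic form $i^2+2cij+(c^2-4d)j^2$, whose discriminant in $j$ is $(2ci)^2-4i^2(c^2-4d)=16di^2$; completing the square gives $i^2+2cij+(c^2-4d)j^2=(i+cj)^2-4dj^2$. Meanwhile the left-hand side form factors after scaling. The heart of the matter is to find an explicit substitution $j\mapsto \sigma(j)$ permuting the residues modulo $n$ that carries one summand to the other.

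First I would reduce to the case where $n=q$ is an odd prime, since $n$ is squarefree and both sides, being sums of products of Jacobi symbols, are multiplicative in $n$ by the Chinese Remainder Theorem. More precisely, writing $\l(\f{\cdot}n\r)=\prod_{q\mid n}\l(\f{\cdot}q\r)$ and splitting the single index $j\in\{0,\cs,n-1\}$ via CRT into a tuple $(j_q)_{q\mid n}$, each side factors as a product over the prime divisors $q$ of the corresponding sum modulo $q$. Hence it suffices to establish \eqref{1.3} with $n$ replaced by a single odd prime $q$, and for all residues $i$.

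Next, over the prime field $\F_q$, I would treat the two sides as character sums and match them termwise after a linear substitution. On the left, set $f(j)=i^2+cij+dj^2$; on the right, set $g(j)=i^2+2cij+(c^2-4d)j^2=(i+cj)^2-4dj^2$. The natural idea is to substitute $j\mapsto 2j$ (legitimate since $q$ is odd) and then shift, using $\l(\f{2}q\r)$ and $\l(\f{-1}q\r)$ factors to absorb the extra constants; because $\l(\f{j}q\r)$ and $\l(\f{-j}q\r)$ differ exactly by $\l(\f{-1}q\r)$, the parity introduced by the substitution should be exactly compensated. Concretely I expect that replacing $j$ by a suitable $\al j$ transforms $g(j)$ into a scalar multiple of $f$ evaluated at a shifted argument, with the scalar a perfect square times a controlled sign, so that the Jacobi symbols on the two sides agree after reindexing the sum.

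The main obstacle, and the step requiring care, is the bookkeeping of the sign and square factors that arise from completing the square and from the weight $\l(\f jn\r)$ versus $\l(\f{-j}n\r)$: one must verify that the substitution is a genuine bijection of $\F_q$ (so the sum is merely reindexed, not distorted) and that the accumulated Legendre-symbol constants multiply to exactly $\l(\f{-1}q\r)$, matching the sign flip between $j$ and $-j$ on the right-hand side. I would handle the degenerate contributions (the term $j=0$, and any $i\eq0$ or vanishing-discriminant cases) separately, checking that they contribute equally to both sides, and then confirm the generic identity by the clean substitution. Once the prime case is settled with all signs tracked, the multiplicative reduction assembles the general squarefree $n$ without further difficulty.
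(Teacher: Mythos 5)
Your reduction to the prime case via the Chinese Remainder Theorem is correct and coincides with the paper's Case 2, but the core of your argument --- finding a bijection $\sigma$ of $\mathbb{F}_p$ (in particular a linear substitution $j\mapsto\alpha j$) that carries the summands of one side termwise to the summands of the other --- cannot work, and this is exactly where the difficulty of the theorem lies. As polynomials in $j$, the form $i^2+cij+dj^2$ has discriminant $i^2(c^2-4d)$, while $i^2+2cij+(c^2-4d)j^2$ has discriminant $16di^2$; no rescaling of $j$ changes the quadratic character of a discriminant, so the number of vanishing summands on the left is governed by $\left(\frac{c^2-4d}{p}\right)$ and on the right by $\left(\frac{d}{p}\right)$, and these can differ. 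Concretely, take $p=7$, $c=0$, $d=3$, $i=1$: the left-hand summands $\left(\frac{j}{7}\right)\left(\frac{1+3j^2}{7}\right)$ for $j=0,\dots,6$ form the multiset $\{0,1,-1,0,0,1,-1\}$ (zeros at $j=0,3,4$), while the right-hand summands $\left(\frac{-j}{7}\right)\left(\frac{1+2j^2}{7}\right)$ form the multiset $\{0,1,-1,-1,1,1,-1\}$ (the only zero is at $j=0$). Both sums equal $0$, as the theorem asserts, but the multisets of terms are different, so \emph{no} reindexing of $\{0,\dots,p-1\}$ can match them; careful bookkeeping of signs and square factors cannot repair a termwise correspondence that does not exist.

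The identity is genuinely deeper than a change of variables. After the substitution $j=ir$ (valid for $p\nmid i$) it states that the character sums attached to the curves $y^2=x(dx^2+cx+1)$ and $y^2=-x((c^2-4d)x^2+2cx+1)$ agree, i.e., that two $2$-isogenous elliptic curves have the same number of $\mathbb{F}_p$-points --- a global statement invisible to termwise manipulation. The paper's actual route is: (a) Lemma 2.1, which expands the Legendre symbol as a $(p-1)/2$-th power and uses power-sum identities to show that, for $p\nmid c$, each side is congruent modulo $p$ to an explicit truncated central-binomial sum $-\left(\frac{ci}{p}\right)\sum_{k=0}^{p-1}\binom{4k}{2k}\binom{2k}{k}\left(\frac{d}{16c^2}\right)^k$; (b) Lemma 2.2, a nontrivial supercongruence of Sun relating $\sum_{k}\binom{4k}{2k}\binom{2k}{k}x^k/64^k$ to $\left(\frac{-2}{p}\right)\sum_{k}\binom{4k}{2k}\binom{2k}{k}(1-x)^k/64^k$ modulo $p^2$, applied with $x=4d/c^2$, which yields the desired identity \emph{only as a congruence modulo} $p$; and (c) a parity argument (the difference $D$ of the two sides is even and satisfies $|D|<2p$, hence $D=0$), plus separate elementary treatments of the degenerate cases $p\mid c$ and $p\mid i$. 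Your proposal contains analogues of the easy ingredients (CRT reduction, degenerate cases) but is missing any substitute for the essential input (b); to fix it you would need either that supercongruence or the isogeny-invariance of point counts, not a cleverer substitution.
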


\begin{theorem}\label{Th1.3} Let $n$ be a positive odd integer which is squarefree, and let $i\in\Z$.
Then
\begin{align}\label{1.4}
\sum_{j=0}^{n-1}\l(\f jn\r)\l(\f{i^2+3ij+2j^2}n\r)=0&\quad\t{if}\ -1\ R\ n\ \t{fails},
\\\label{1.5}\sum_{j=0}^{n-1}\l(\f jn\r)\l(\f{i^2+4ij+2j^2}n\r)=0&\quad\t{if}\ -2\ R\ n\ \t{fails},
\\\label{1.6}\sum_{j=0}^{n-1}\l(\f jn\r)\l(\f{i^2+3ij+3j^2}n\r)=0&\quad\t{if}\ -3\ R\ n\ \t{fails},
\\\label{1.7}\sum_{j=0}^{n-1}\l(\f jn\r)\l(\f{i^2+21ij+112j^2}n\r)=0&\quad\t{if}\ -7\ R\ n\ \t{fails},
\end{align}
where the notation $m\ R\ n$ means that $m$ is a quadratic residue modulo $n$.
\end{theorem}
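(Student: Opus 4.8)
The plan is to reduce everything to a product of character sums over the prime divisors of $n$, and then to analyze a single cubic character sum for each of the four forms. Since $n$ is odd and squarefree, the Chinese Remainder Theorem and the complete multiplicativity of the Jacobi symbol give, for $S_{c,d}(i):=\sum_{j=0}^{n-1}\l(\f jn\r)\l(\f{i^2+cij+dj^2}n\r)$, the factorization $S_{c,d}(i)=\prod_{p\mid n}\sigma_p(i)$ with $\sigma_p(i)=\sum_{j=0}^{p-1}\l(\f jp\r)\l(\f{i^2+cij+dj^2}p\r)$. For squarefree $n$ the failure of $-m\,R\,n$ is equivalent to the existence of a prime $p\mid n$ with $\l(\f{-m}p\r)=-1$ (or $p\mid m$), so it suffices to show that the local factor $\sigma_p(i)$ vanishes for all $i$ whenever $\l(\f{-m}p\r)=-1$, with $m=1,2,3,7$ for the four forms; one vanishing factor annihilates the whole product.

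Next I would strip off the dependence on $i$. If $p\mid i$ then $\sigma_p(i)=\l(\f dp\r)\sum_j\l(\f{j^3}p\r)=0$, while for $p\nmid i$ the substitution $j=ik$ together with $\l(\f{i^3}p\r)=\l(\f ip\r)$ gives $\sigma_p(i)=\l(\f ip\r)\tau_p$, where, after inverting the summation variable, $\tau_p=\sum_{x}\l(\f{x^3+cx^2+dx}p\r)$. Thus the entire problem collapses to the single cubic character sum $\tau_p=\tau_p(c,d)$, which is $-a_p$ for the elliptic curve $E_{c,d}\colon y^2=x^3+cx^2+dx$. The four forms have $c^2-4d=1,8,-3,-7$, and each $E_{c,d}$ has complex multiplication by an order in $\Q(\sqrt{-m})$; the goal becomes $\tau_p=0$ for primes $p$ inert in that field.

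Three of the cases should then fall to explicit substitutions. For $(3,2)$ the cubic is $x(x+1)(x+2)$, which the shift $x\mapsto x-1$ turns into the odd polynomial $x^3-x$, so $x\mapsto -x$ yields $\tau_p=\l(\f{-1}p\r)\tau_p$ and hence $\tau_p=0$ once $\l(\f{-1}p\r)=-1$. For $(3,3)$ one has $x^3+3x^2+3x=(x+1)^3-1$, so $\tau_p=\sum_u\l(\f{u^3-1}p\r)$; when $\l(\f{-3}p\r)=-1$, i.e. $p\eq2\pmod3$, the map $u\mapsto u^3$ permutes $\F_p$ and the sum becomes $\sum_v\l(\f{v-1}p\r)=0$. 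For $(4,2)$, which satisfies $c^2=8d$, I would apply Theorem~\ref{Th1.2} once (carrying $(4,2)$ to $(8,8)$) and then rescale $j\mapsto j/2$ (contributing the factor $\l(\f2p\r)$): this returns to $(4,2)$ and produces $\tau_p(4,2)=\l(\f{-2}p\r)\tau_p(4,2)$, again forcing vanishing when $\l(\f{-2}p\r)=-1$. The small primes $p\mid 2m$ (for instance $p=3$ for $(3,3)$ and $p=7$ for $(21,112)$) are disposed of by the direct evaluation $\tau_p=\sum_x\l(\f{x^3}p\r)=0$.

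The remaining case $(21,112)$, with $E\colon y^2=x^3+21x^2+112x$ having CM by an order in $\Q(\sqrt{-7})$, is the crux, and I expect it to be the main obstacle. Here the cubic possesses no rational symmetry, completing the cube leaves a nonzero linear term, and Theorem~\ref{Th1.2} only links $E$ to its non-isomorphic $2$-isogenous curve $(-42,-7)$; indeed the involution $r\mapsto\frac14-r$ on the scaling invariant $r=d/c^2$ induced by Theorem~\ref{Th1.2} has its unique fixed point at $r=\frac18$, which is exactly the solved case $(4,2)$, so none of the earlier devices close up for $r=\frac{16}{63}$. To finish I would appeal to the theory of character sums over finite fields: either via Deuring's criterion, that a CM elliptic curve is supersingular precisely at the primes inert in its CM field, whence $a_p=0$ and $\tau_p=0$ when $\l(\f{-7}p\r)=-1$; or, more in the self-contained spirit of the paper, by expressing $\tau_p(21,112)$ through Jacobi (Jacobsthal) sums attached to $\Q(\sqrt{-7})$ and showing that the associated Frobenius trace vanishes exactly when $-7$ is a quadratic non-residue. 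This CM input is the heart of the matter; the surrounding bookkeeping over the primes dividing $n$ is routine.
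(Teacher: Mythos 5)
Your proposal is correct in substance, and for three of the four forms it takes a genuinely more elementary route than the paper. The skeleton is shared: both you and the paper factor the sum over squarefree $n$ into local factors via CRT (the paper's identity \eqref{2.3}), reduce the local sum to a cubic character sum $\tau_p=\sum_x\left(\frac{x^3+cx^2+dx}{p}\right)$ (the paper's Lemma \ref{Lem3.2} does this in slightly different normalization), and kill one local factor for each prime at which $-m$ fails to be a residue. The difference is in how the local vanishing is proved. The paper evaluates each $\tau_p$ \emph{exactly}: for $(4,2)$ it combines Lemma \ref{Lem2.1} with Z.-H. Sun's supercongruence for $\sum_k\binom{4k}{2k}\binom{2k}{k}/128^k$ (Lemma \ref{Lem3.1}) and a parity-and-size argument; for $(3,2)$ and $(3,3)$ it quotes the Berndt--Evans--Williams evaluations of $\sum_x\left(\frac{x^3-4x}{p}\right)$ and $\sum_x\left(\frac{x^3-8}{p}\right)$; for $(21,112)$ it quotes Rajwade. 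Your replacements for the first three cases are complete and self-contained: the oddness of $x^3-x$ for $(3,2)$; the cubing bijection on $\mathbb{F}_p$ when $p\equiv2\pmod 3$ for $(3,3)$; and, the nicest observation, the exact identity $S_{4,2}(i)=\left(\frac{-2}{p}\right)S_{4,2}(i)$ obtained from Theorem \ref{Th1.2} (legitimately available, since it is proved independently in Section 2) followed by the rescaling $j\mapsto j/2$. Since that identity is exact over $\Z$, no mod-$p$ congruence, parity, or size bookkeeping is needed at all, which is a real simplification over the paper's treatment of $(4,2)$. What you give up is the exact value $\pm2x$ (with $p=x^2+my^2$) in the residue case, but Theorem \ref{Th1.3} does not need it. For $(21,112)$ your input --- Deuring's criterion for the CM curve $y^2=x(x^2+21x+112)$, or equivalently Jacobsthal-sum computations for $\Q(\sqrt{-7})$ --- is mathematically the same ingredient as the paper's citation of Rajwade, whose proof is exactly such a computation; there is no shortcut here and you correctly identified this case as the one that cannot be done by elementary symmetry.

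One boundary case needs a patch. In the Deuring argument, supersingularity gives $p\mid a_p$, and $|a_p|\le 2\sqrt p$ forces $a_p=0$ only when $p\ge 5$; the prime $p=3$ is inert in $\Q(\sqrt{-7})$ and is \emph{not} covered by your ``small primes dividing $2m$'' remark (that only disposes of $p=7$), and over $\mathbb{F}_3$ supersingular curves can have $a_3=\pm3$ (e.g. $y^2=x^3-x+1$). For your curve this is excluded because the rational point $(0,0)$ gives $2\mid\#E(\mathbb{F}_3)$, hence $a_3$ is even, and an even multiple of $3$ with $|a_3|\le2\sqrt3$ must vanish; alternatively one checks directly that $\sum_x\left(\frac{x^3+x}{3}\right)=0$. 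With that one-line fix your argument is complete.
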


Our following result was originally conjectured by Sun \cite[Conjecture 4.8(iv)]{S19}.

\begin{theorem}\label{Th1.4} {\rm (i)} $(10,9)_p=0$ for any prime $p\eq5\pmod{12}$.

{\rm (ii)} $[5,5]_p=0$ for any prime $p\eq13,17\pmod{20}$.
\end{theorem}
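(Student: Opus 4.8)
The plan is to diagonalize these matrices by the multiplicative characters of $\F_p$ and to reduce each determinant to the vanishing of a single twisted character sum. The first observation is that the entries are multiplicatively homogeneous: for $1\ls i,j\ls p-1$ we have $\l(\f{i^2+cij+dj^2}p\r)=\l(\f{(i/j)^2+c(i/j)+d}p\r)=f(ij^{-1})$, where $f(x):=\l(\f{x^2+cx+d}p\r)$ and $i/j$ is computed in $\F_p^*$, since $j^2$ is a square. Thus $(c,d)_p=|f(ij^{-1})|_{1\ls i,j\ls p-1}$ is the group determinant of $\F_p^*\cong\Z/(p-1)\Z$, and the classical factorization of a circulant (group) determinant gives $(c,d)_p=\prod_\chi\la_\chi$ with $\la_\chi:=\sum_{x\in\F_p^*}f(x)\chi(x)$, the product running over all multiplicative characters $\chi$. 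Hence it suffices to produce one character with $\la_\chi=0$. For the trivial character $\la_{\chi_0}=-1-\l(\f dp\r)$, which is $-2$ for $(10,9)$ and $0$ for $(5,5)$, recovering \eqref{cd1}. Since $\l(\f dp\r)=1$ for $(10,9)$, the trivial character is useless; the only characters present for every $p\eq5\pmod{12}$ are those of orders $2$ and $4$ (as $4\mid p-1$ but $3\nmid p-1$), and the quadratic one yields a non-CM elliptic-curve sum that cannot vanish identically, so I must take $\chi$ of order $4$.

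For part (ii) the bordering row and column must be handled. Writing $[5,5]_p=\det B$ with $B=\l(\begin{smallmatrix}0&\da\mathbf1^{T}\\ \mathbf1&A\end{smallmatrix}\r)$, where $A$ is the $(5,5)_p$-matrix, $\da=\l(\f5p\r)=-1$ and $\mathbf1$ is the all-ones vector, I would look for a kernel vector $(0,w)^{T}$. Since $A(\ov\chi(j))_j=\la_\chi(\ov\chi(i))_i$ and $\sum_j\ov\chi(j)=0$ for $\chi\ne\chi_0$, the vector $(0,\ov\chi(1),\cs,\ov\chi(p-1))^{T}$ satisfies both $\mathbf1^{T}w=0$ and $Aw=0$ as soon as $\la_\chi=0$ for some nontrivial $\chi$, whence $\det B=0$. (Note $(5,5)_p=0$ is already given by \eqref{cd1}, so the whole content of (ii) is the border.) Thus (ii) also reduces to exhibiting a nontrivial $\chi$—again necessarily of order $4$, as $4\mid p-1$, $5\nmid p-1$, and the quadratic case is a non-CM elliptic-curve sum—with $\la_\chi^{(5,5)}=0$.

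It remains to evaluate these order-$4$ sums. For $(10,9)$ the substitution $x\mapsto9/x$ fixes $\l(\f{(x+1)(x+9)}p\r)$ and sends $\chi(x)$ to $\chi(9)\ov\chi(x)$; together with $\ov{\la_\chi}=\la_{\ov\chi}$ and $\chi(9)=\l(\f3p\r)=-1$ (which holds precisely because $p\eq2\pmod3$) this forces $\la_\chi\in i\R$, so its real part cancels automatically but genuine vanishing is not yet established. To go further I would open both Legendre symbols by Gauss sums and collapse the double sum: using $g(\chi)g(\ov\chi)=\chi(-1)p$ and $g(\eta)^2=\eta(-1)p=p$ (here $\eta=\l(\f\cdot p\r)$ and $p\eq1\pmod4$) one rewrites $\la_\chi$ as a single ${}_2F_1$-type Jacobi sum $\sum_w\eta(w)\chi(w+1)\ov\chi(w+\tfrac19)$, whose three ramification points carry $\eta,\chi,\ov\chi$. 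For $(5,5)$ the quadratic is irreducible mod $p$, so here I would use $\l(\f{x^2+5x+5}p\r)=\eta_2(x-\ta)$ with $\eta_2$ the quadratic character of $\F_{p^2}$ and $\ta$ a root, turning $\la_\chi$ into a sum tied to $\F_{p^2}$ and to the splitting of $p$ in $\Q(\zeta_5)\supset\Q(\sqrt5)$; the analogous symmetry $x\mapsto5/x$ gives $\la_\chi=-\chi(5)\ov{\la_\chi}$ with $\chi(5)=\pm i$ (since $\eta(5)=-1$).

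The hard part will be this last step: proving that the resulting Gauss/Jacobi-sum expression is actually $0$, not merely purely imaginary. I expect this to rest on the Hasse--Davenport relations combined with the special arithmetic singled out by the hypotheses—the bijectivity of $x\mapsto x^3$ on $\F_p$ when $p\eq2\pmod3$ for $(10,9)$, and the behaviour of the quartic Gauss sum relative to the representation $p=a^2+b^2$ under $p\eq13,17\pmod{20}$. Pinning down the precise quadratic or cubic hypergeometric transformation that sends our sum to a degenerate Jacobi sum, and checking that this degeneracy occurs exactly under the stated congruences, is the crux of the argument.
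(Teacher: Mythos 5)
Your reduction framework is correct, and it is worth pointing out that it is essentially the spectral form of what the paper itself does: the group-determinant factorization $(c,d)_p=\prod_\chi\lambda_\chi$ with $\lambda_\chi=\sum_{x\in\mathbb{F}_p^*}\chi(x)\left(\frac{x^2+cx+d}{p}\right)$ diagonalizes the same matrix whose kernel the paper exhibits directly, and the two-dimensional kernel constructed in the paper's Lemma 4.2 is precisely the span of the two quartic character vectors $\chi_4$ and $\overline{\chi_4}$. So both arguments stand or fall with the single statement $\lambda_{\chi_4}=0$. Your bordering argument for $[5,5]_p$ and the evaluation $\lambda_{\chi_0}=-1-\left(\frac dp\right)$ are also fine.

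The genuine gap is that you never prove $\lambda_{\chi_4}=0$, and you say so yourself: what you call ``the crux'' is the entire content of the theorem. The symmetry $x\mapsto 9/x$ gives $\lambda_{\chi_4}=-\overline{\lambda_{\chi_4}}$, i.e.\ only $\operatorname{Re}\lambda_{\chi_4}=0$; in the paper's notation this is the condition $\sum_x\chi(xP(x^2))=0$ of (4.3), which is the easy half (it is the paper's ``nonsquare $a$'' case, proved by the same substitution). The missing half, $\operatorname{Im}\lambda_{\chi_4}=0$, is equivalent to $\sum_x\chi(xP(gx^2))=0$ for a nonsquare $g$, and no amount of phase information can force a complex number to vanish. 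The paper closes this gap with two concrete devices absent from your proposal: for $(10,9)$, the identity $2xP_3(x^2,1)=(x+1)^6-(x-1)^6$ and the substitution $y=(x+1)/(x-1)$ turn the sum into $1+\sum_{y\neq1}\chi\bigl((y^3+1)/(y^3-1)\bigr)$, and bijectivity of $y\mapsto y^3$ on $\mathbb{F}_p$ (this is exactly where $p\equiv2\pmod3$ enters) collapses it to $1+\sum_{z\neq1}\chi(z)=0$; for $(5,5)$, the polynomial $x^5+5x^3+5x$ is the Dickson polynomial $D_5(x,-1)$, which is a permutation polynomial of $\mathbb{F}_p$ exactly when $\gcd(5,p^2-1)=1$, i.e.\ when $p\equiv13,17\pmod{20}$, so all the twisted sums vanish trivially. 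Your guesses (bijectivity of cubing, the special role of the congruence mod $20$) point at the right arithmetic inputs, but the actual mechanism is these elementary polynomial identities rather than Hasse--Davenport relations among Gauss sums, and without it the central step of your proof is missing.
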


In fact, our proof of Theorem 1.4 yields a stronger result:
For each integer $y$, we have
$$\sum_{x=0}^{p-1}\l(\f{x^5+10x^3y+9xy^2}p\r)=0$$
for any prime $p\eq5\pmod{12}$, and
$$\sum_{x=0}^{p-1}\l(\f{x^5+5x^3y+5xy^2}p\r)=0$$
for any prime $p\eq13,17\pmod{20}$.

We will prove Theorem 1.2, Theorems 1.3 and 1.1, and Theorem 1.4
in Sections 2-4 respectively.

Sun \cite[Conjecture 4.8(iv)]{S19} also conjectured that $(8,18)_p=[8,18]_p=0$
for any prime $p\eq13,17\pmod{24}$. Moreover, Sun \cite{S18} conjectured that
$$\sum_{x=0}^{p-1}\l(\f{x^5+8x^3y+18xy^2}p\r)=0$$
for any prime $p\eq13,17\pmod{24}$ and integer $y$,
and this was confirmed by M. Stoll via two elliptic curves
with complex multiplication by $\Z[\sqrt{-6}]$ (see the answer in \cite{S18}).

For any prime $p\eq1\pmod4$ and $a,b,c\in\Z$, we provide in Section 5 a sufficient condition
for
$$\sum_{x=0}^{p-1}\l(\f{ax^5+bx^3+cx}p\r)=2\sum_{x=1}^{(p-1)/2}\l(\f xp\r)
\l(\f{a(x^2)^2+bx^2+c}p\r)=0.$$

\section{Proof of Theorem 1.2}
\setcounter{lemma}{0}
\setcounter{theorem}{0}
\setcounter{corollary}{0}
\setcounter{remark}{0}
\setcounter{equation}{0}

\begin{lemma}\label{Lem2.1} Let $p$ be an odd prime and let $c,d,i\in\Z$ with $p\nmid c$. Then
\begin{equation}\label{2.1}\sum_{j=0}^{p-1}\l(\f jp\r)\l(\f{i^2+cij+dj^2}p\r)\eq-\l(\f{ci}p\r)\sum_{k=0}^{p-1}\bi{4k}{2k}\bi{2k}k\l(\f{d}{16c^2}\r)^k\pmod p.
\end{equation}
\end{lemma}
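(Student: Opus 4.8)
The plan is to replace every Legendre symbol by the corresponding $(p-1)/2$-th power via Euler's criterion, turning the congruence into a polynomial power-sum over $\F_p$. Writing $m=(p-1)/2$, we have $(\f jp)\eq j^m$ and $(\f{i^2+cij+dj^2}p)\eq(i^2+cij+dj^2)^m\pmod p$, so the left-hand side is congruent mod $p$ to
$$S:=\sum_{j=0}^{p-1}j^m(i^2+cij+dj^2)^m.$$
If $p\mid i$ both sides vanish trivially: on the right $(\f{ci}p)=0$, while on the left $i^2+cij+dj^2\eq dj^2$ gives $\sum_{j}(\f jp)(\f{dj^2}p)=(\f dp)\sum_j(\f jp)=0$. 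Hence I would assume $p\nmid i$ from now on.

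For $p\nmid ci$ I would perform two successive substitutions that permute the residues mod $p$. First set $j=iu$; using $i^{p-1}\eq1$ this factors out $i^m\eq(\f ip)$ and replaces the quadratic by $1+cu+du^2$. Then set $v=cu$; this factors out $c^{-m}\eq(\f cp)$ and turns the quadratic into $1+v+ev^2$ with $e:=d/c^2$. Together these reduce $S$ to $(\f{ci}p)U$, where
$$U:=\sum_{v=0}^{p-1}v^m(1+v+ev^2)^m\pmod p.$$

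Next I would expand $(1+v+ev^2)^m$ by the multinomial theorem and integrate term by term using the standard fact $\sum_{v=0}^{p-1}v^t\eq-1$ when $(p-1)\mid t$ and $t>0$, and $\eq0$ otherwise. A term indexed by $\alpha+\beta+\gamma=m$ contributes the monomial $v^{m+\beta+2\gamma}=v^{2m-\alpha+\gamma}$; since $-\alpha+\gamma$ lies in $[-m,m]$, the exponent is a positive multiple of $2m=p-1$ exactly when $\gamma=\alpha$ (whence $\beta=m-2\alpha\gs0$). Using $\sum_v v^{p-1}\eq-1$ this leaves only the diagonal terms, giving
$$U\eq-\sum_{\alpha=0}^{\lfloor m/2\rfloor}\f{m!}{(\alpha!)^2(m-2\alpha)!}e^{\alpha}=-\sum_{\alpha=0}^{\lfloor m/2\rfloor}\bi{m}{2\alpha}\bi{2\alpha}{\alpha}e^{\alpha}\pmod p.$$
I would then invoke the classical congruence $\bi{(p-1)/2}k\eq\bi{2k}k(-4)^{-k}\pmod p$ with $k=2\alpha$, which yields $\bi{m}{2\alpha}\eq\bi{4\alpha}{2\alpha}16^{-\alpha}$, so the summand becomes $\bi{4\alpha}{2\alpha}\bi{2\alpha}{\alpha}(e/16)^{\alpha}$ with $e/16=d/(16c^2)$.

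It remains to extend the range from $\alpha\ls\lfloor m/2\rfloor$ to the full range $0\ls k\ls p-1$ appearing in the statement. By Kummer's theorem (counting carries in base $p$) every extra term vanishes mod $p$: for $\lfloor(p-1)/4\rfloor<\alpha\ls(p-1)/2$ one has $2\alpha\ls p-1$ but $4\alpha\gs p$, forcing a carry in $2\alpha+2\alpha$ and hence $p\mid\bi{4\alpha}{2\alpha}$; and for $(p-1)/2<\alpha\ls p-1$ one has $2\alpha\gs p$, forcing $p\mid\bi{2\alpha}{\alpha}$. Substituting back $S\eq(\f{ci}p)U$ then gives the desired congruence. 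The main obstacle I anticipate is the bookkeeping in the multinomial step: correctly isolating the surviving diagonal terms $\gamma=\alpha$ and matching the coefficient $\f{m!}{(\alpha!)^2(m-2\alpha)!}=\bi{m}{2\alpha}\bi{2\alpha}{\alpha}$ to the central binomial product via the $\bi{(p-1)/2}k$-congruence; by comparison the two substitutions and the tail-vanishing argument are routine.
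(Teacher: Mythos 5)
Your proposal is correct and takes essentially the same route as the paper: both use Euler's criterion and the substitution $j=iu$ to reduce to a power sum over $\mathbb F_p$, extract the surviving terms (your diagonal $\gamma=\alpha$, the paper's constant coefficient $a_0$ of the Laurent polynomial $(r^{-1}+c+dr)^{(p-1)/2}$) via $\sum_v v^t \equiv -1$ or $0$, convert $\binom{(p-1)/2}{2k}$ to $\binom{4k}{2k}16^{-k}$ modulo $p$, and extend the summation range using $p\mid\binom{4k}{2k}$ or $p\mid\binom{2k}{k}$ in the tail. Your second substitution $v=cu$ just normalizes $c$ to $1$, whereas the paper carries $c$ along and absorbs it through Euler's criterion; the underlying computation is identical.
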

\Proof. If $p\mid i$, then both sides of the congruence \eqref{2.1} are zero.

Below we assume $p\nmid i$ and let $L$ denote the left-hand side of the congruence (\ref{2.1}). As $\{ir:\ r=0,\ldots,p-1\}$ is a complete system of residues modulo $p$, we have
\begin{align*}
L=&\sum_{r=0}^{p-1}\l(\f{ir}p\r)\l(\f{i^2+ci(ir)+d(ir)^2}p\r)
=\l(\f{i^3}p\r)\sum_{r=0}^{p-1}\l(\f rp\r)\l(\f{1+cr+dr^2}p\r)
\\\eq&\l(\f ip\r)\sum_{r=1}^{p-1}r^{(p-1)/2}(1+cr+dr^2)^{(p-1)/2}
\\\eq&\l(\f ip\r)\sum_{r=1}^{p-1}\l(r^{-1}+c+dr\r)^{(p-1)/2}\pmod{p}.
\end{align*}
We may write $(x^{-1}+c+dx)^{(p-1)/2}=\sum_{s=-(p-1)/2}^{(p-1)/2}a_sx^s$ with $a_s\in\Z$.
For any integer $s$, it is well known (cf. \cite[p.~235]{IR}) that
\begin{equation}
  \label{eq:sumofpowers}
\sum_{r=1}^{p-1}r^s\eq\begin{cases}-1\pmod p&\t{if}\ p-1\mid s,\\0\pmod{p}&\t{otherwise}.
\end{cases}
\end{equation}
Therefore,
$$\sum_{r=1}^{p-1}\l(r^{-1}+c+dr\r)^{(p-1)/2}
=\sum_{s=-(p-1)/2}^{(p-1)/2}a_s\sum_{r=1}^{p-1}r^s\eq-a_0\pmod p.$$
Clearly,
\begin{align*}a_0=&\sum_{k=0}^{(p-1)/2}\bi{(p-1)/2}{2k}\bi{2k}k c^{(p-1)/2-2k}d^k
\\\eq&\sum_{k=0}^{(p-1)/2}\bi{-1/2}{2k}\bi{2k}k\l(\f cp\r)\l(\f d{c^2}\r)^k
=\l(\f cp\r)\sum_{k=0}^{(p-1)/2}\f{\bi{4k}{2k}\bi{2k}k}{(-4)^{2k}}\l(\f d{c^2}\r)^k
\\=&\l(\f cp\r)\sum_{k=0}^{p-1}\bi{4k}{2k}\bi{2k}k\l(\f{d}{16c^2}\r)^k\pmod p.
\end{align*}
So, by the above, we finally obtain \eqref{2.1}. \qed

\begin{lemma}\label{Lem2.2} Let $p$ be any odd prime. Then we have the congruence
\begin{equation}\label{2.2}\sum_{k=0}^{p-1}\f{\bi{4k}{2k}\bi{2k}k}{64^k}\l(x^k-\l(\f{-2}p\r)(1-x)^k\r)
\eq0\pmod{p^{2-\da_{p,3}}}\end{equation}
in the ring $\Z_p[x]$, where $\Z_p$ is the ring of all $p$-adic integers, and $\da_{p,3}$ is $1$ or $0$ according as $p=3$ or not.
\end{lemma}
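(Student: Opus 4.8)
The plan is to treat the left-hand side of \eqref{2.2} as a polynomial in $x$ of degree at most $p-1$ over $\Z_p$ and to verify the congruence coefficient by coefficient (a polynomial in $\Z_p[x]$ vanishes modulo $p^N$ iff all its coefficients do). First I would record the term ratio $c_{k+1}/c_k=\f{(4k+1)(4k+3)}{16(k+1)^2}$ for $c_k:=\f{\bi{4k}{2k}\bi{2k}k}{64^k}$, which identifies $c_k=\f{(1/4)_k(3/4)_k}{(k!)^2}$, so that $F(x):=\sum_{k=0}^{p-1}c_kx^k$ is the $p$-truncation of ${}_2F_1(1/4,3/4;1;x)$ and \eqref{2.2} is exactly the assertion $F(x)\eq(\f{-2}p)F(1-x)\pmod{p^{2-\da_{p,3}}}$. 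Conceptually this reflects the invariance of the hypergeometric equation $x(1-x)y''+(1-2x)y'-\f3{16}y=0$ under $x\mapsto1-x$, which holds precisely because the lower parameter $c=1$ equals $a+b$; the factor $(\f{-2}p)$ is the arithmetic period attached to this symmetry.

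Expanding $(1-x)^k=\sum_m\bi km(-1)^mx^m$ and comparing the coefficient of $x^m$, the congruence \eqref{2.2} becomes
\begin{equation*}
c_m\l((-1)^mT_m-\l(\f{-2}p\r)\r)\eq0\pmod{p^{2-\da_{p,3}}}\qquad(0\ls m\ls p-1),
\end{equation*}
where, after rewriting $\bi km c_k=c_m\f{(1/4+m)_{k-m}(3/4+m)_{k-m}}{(1+m)_{k-m}(k-m)!}$, the quantity $T_m:=\sum_{j=0}^{p-1-m}\f{(1/4+m)_j(3/4+m)_j}{(1+m)_j\,j!}$ is a truncated ${}_2F_1$ evaluated at $1$. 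The base case $m=0$ is the supercongruence $\sum_{k=0}^{p-1}c_k\eq(\f{-2}p)\pmod{p^2}$; I would prove it by writing $c_k$ through the $p$-adic Gamma function and applying its reflection and multiplication formulas, the value $(\f{-2}p)$ emerging from a Gauss-sum / quadratic-character computation (equivalently, a Wilf--Zeilberger certificate yields the same mod-$p^2$ identity).

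To reach general $m$ I would propagate from $m=0$ by a Gauss contiguous relation for the truncated $T_m$, tracking the truncation error, which is supported on the single tail term indexed by $k=p-1$ and is controlled by the Wolstenholme-type congruences $\bi{2p}p\eq2$ and $\bi{4p}{2p}\eq6$ modulo high powers of $p$. A direct check (e.g. $p=5$) confirms the mechanism: there $c_1$ is a unit and $(-1)T_1-(\f{-2}5)\eq0\pmod{25}$, while $c_2$ is divisible by $p$ and only $T_2\eq(\f{-2}5)\pmod5$ is needed.

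The main obstacle is uniformity in $m$ at the full mod-$p^2$ precision. The factor $c_m$ becomes divisible by $p$ once $m$ exceeds the least nonnegative residue of $-1/4$ or $-3/4$ modulo $p$ (where $(1/4)_m$ or $(3/4)_m$ first meets its vanishing factor), and in that range one only needs $(-1)^mT_m\eq(\f{-2}p)\pmod p$, whereas where $c_m$ is a $p$-adic unit one needs the congruence for $T_m$ to the second power of $p$. Matching these two precisions—showing that whenever $c_m$ is a unit the truncated value $T_m$ satisfies the sharper supercongruence—is the delicate point. It is also exactly here that the modulus must be weakened when $p=3$: the sharper mod-$p^2$ relation genuinely fails, so for $p=3$ one settles for the mod-$p$ statement, verified directly on the polynomial $F(x)-(\f{-2}3)F(1-x)$ of degree $\ls2$.
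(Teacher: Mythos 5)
Your coefficient-by-coefficient reduction is correct algebra (the identification $\binom{4k}{2k}\binom{2k}{k}/64^k=(1/4)_k(3/4)_k/(k!)^2$ and the rewriting of the $x^m$-coefficient as $c_m\bigl((-1)^mT_m-(\frac{-2}{p})\bigr)$ both check out), but it merely converts the lemma into a family of supercongruences, none of which you actually prove. The base case $m=0$ is $\sum_{k=0}^{p-1}\binom{4k}{2k}\binom{2k}{k}/64^k\equiv(\frac{-2}{p})\pmod{p^2}$, which is itself a deep supercongruence --- it is exactly the $x=1$ specialization of \eqref{2.2}, i.e.\ a special case of the result of Sun \cite[(1.15)]{S13} that the paper invokes; proposing to obtain it ``via $p$-adic Gamma reflection and a Gauss-sum computation (or WZ)'' is a declaration of intent, not an argument. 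More seriously, the passage from $m=0$ to general $m$ --- contiguous relations for the truncated sums $T_m$, whose truncation index $p-1-m$ moves with $m$, with all boundary terms controlled uniformly modulo $p^2$ --- is precisely where the entire difficulty of the lemma sits, and you explicitly leave it unresolved (``the delicate point''). A proof attempt that names its own missing step is a plan, not a proof. For calibration: the paper does not prove Lemma 2.2 internally either; Remark 2.1 simply quotes \cite[(1.15)]{S13} for $p>3$ and checks $p=3$ directly, so a self-contained argument would have to reproduce work of that scale.

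Separately, your justification of the exponent $2-\delta_{p,3}$ is factually wrong. For $p=3$ one has $(\frac{-2}{3})=1$, and the left-hand side of \eqref{2.2} equals $\bigl(\frac{3}{16}+\frac{105}{1024}\bigr)(2x-1)=\frac{297}{1024}(2x-1)$ with $297=3^3\cdot 11$, so the congruence holds modulo $3^3$, in particular modulo $3^2$: the mod-$p^2$ statement does not ``genuinely fail'' at $p=3$. The weakened modulus in the lemma reflects only that the quoted result of \cite{S13} is stated for $p>3$ (and mod $p$ is all the paper's application needs), not an arithmetic breakdown. Your fallback of verifying the degree-$\leqslant 2$ polynomial directly at $p=3$ is fine for the lemma as stated, but the incorrect claim indicates that the mechanism you describe for general $p$ was not tested even on the smallest prime.
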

\begin{remark}\label{Rem2.1} For any prime $p>3$, the congruence \eqref{2.2} is due to Sun \cite[(1.15)]{S13}. We can easily verify that \eqref{2.2} also holds for $p=3$.
\end{remark}

\medskip
\noindent{\it Proof of Theorem 1.2}. Clearly both sides of \eqref{1.3} vanish if $n=1$.
Below we assume $n>1$ and distinguish three cases.
\medskip

{\it Case} 1. $n$ is an odd prime $p$.
\medskip

Define
$$D:=\sum_{j=0}^{p-1}\l(\f jp\r)\l(\f{i^2+cij+dj^2}p\r)-\sum_{j=0}^{p-1}\l(\f {-j}p\r)\l(\f{i^2+2cij+(c^2-4d)j^2}p\r).$$
If $p\mid c$ and $p\eq3\pmod4$, then
\begin{align*}D=&\sum_{j=1}^{(p-1)/2}\l(\l(\f jp\r)+\l(\f{p-j}p\r)\r)\l(\f{i^2+dj^2}p\r)
\\&-\sum_{j=1}^{(p-1)/2}\l(\l(\f {-j}p\r)+\l(\f{-(p-j)}p\r)\r)\l(\f{i^2-4dj^2}p\r)
\\=&0-0=0.
\end{align*}
When $p\mid c$ and $p\eq1\pmod4$, for $q=((p-1)/2)!$ we have $q^2\eq-1\pmod p$ and $(\f{2q}p)=1$
(cf. \cite[Remark 1.1 and Lemma 2.3]{S19}), thus
\begin{align*}D=&\sum_{j=1}^{p-1}\l(\f jp\r)\l(\f{i^2+dj^2}p\r)-\sum_{j=1}^{p-1}\l(\f {-j}p\r)\l(\f{i^2-4dj^2}p\r)
\\=&\sum_{j=1}^{p-1}\l(\f {2qj}p\r)\l(\f{i^2+d(2qj)^2}p\r)-\sum_{j=1}^{p-1}\l(\f {j}p\r)\l(\f{i^2-4dj^2}p\r)
=0.
\end{align*}

Now suppose that $p\nmid c$. By Lemma 2.2,
\begin{align*}&\sum_{k=0}^{p-1}\bi{4k}{2k}\bi{2k}k\l(\f{d}{16c^2}\r)^k
\\=&\sum_{k=0}^{p-1}\f{\bi{4k}{2k}\bi{2k}k}{64^k}\l(\f{4d}{c^2}\r)^k
\\\eq&\l(\f{-2}p\r)\sum_{k=0}^{p-1}\f{\bi{4k}{2k}\bi{2k}k}{64^k}\l(1-\f{4d}{c^2}\r)^k
\\=&\l(\f{-2}p\r)\sum_{k=0}^{p-1}\bi{4k}{2k}\bi{2k}k\l(\f{c^2-4d}{16(2c)^2}\r)^k\pmod{p^{2-\da_{p,3}}}.
\end{align*}
Combining this with Lemma 2.1, we obtain that
\begin{align*}&\sum_{j=0}^{p-1}\l(\f jp\r)\l(\f{i^2+cij+dj^2}p\r)
\\\eq&-\l(\f{-2ci}p\r)\sum_{k=0}^{p-1}\bi{4k}{2k}\bi{2k}k
\l(\f{c^2-4d}{16(2c)^2}\r)^k
\\\eq&\l(\f{-1}p\r)\sum_{j=0}^{p-1}\l(\f jp\r)\l(\f{i^2+2cij+(c^2-4d)j^2}p\r)\pmod p.
\end{align*}
Thus $D\eq0\pmod p$. Clearly $|D|<2p$.

If $p\mid i$, then
\begin{align*}D=&\sum_{j=0}^{p-1}\l(\f jp\r)\l(\f{dj^2}p\r)-\sum_{j=0}^{p-1}\l(\f{-j}p\r)\l(\f{(c^2-4d)j^2}p\r)
\\=&\l(\l(\f dp\r)-\l(\f{4d-c^2}p\r)\r)\sum_{j=1}^{p-1}\l(\f jp\r)=0.
\end{align*}

Now assume that $p\nmid i$. If neither $c^2-4d$ nor $(2c)^2-4(c^2-4d)=16d$ is divisible by $p$, then
$$|\{1\ls j\ls p-1:\ i^2+cij+dj^2\eq0\pmod p\}|\in\{0,2\}$$
and
$$|\{1\ls j\ls p-1:\ i^2+2cij+(c^2-4d)j^2\eq0\pmod p\}|\in\{0,2\},$$
hence $D$ is even. When $p\mid d$, we also have $2\mid D$ since
$$|\{1\ls j\ls p-1:\ p\mid i(i+cj)\}|=|\{1\ls j\ls p-1:\ p\mid i(i+cj)^2\}|=1.$$
If $p\mid c^2-4d$, then $2\mid D$ since
$$|\{1\ls j\ls p-1:\ p\mid i(i+2cj)\}|=1$$
and
$$\l|\l\{1\ls j\ls p-1:\ i^2+cij+dj^2\eq\l(i+\f c2j\r)^2\eq0\pmod p\r\}\r|=1.$$
So $D$ is always even, and hence $D=0$ as $p\mid D$ and $|D|<2p$.
\bigskip

{\it Case} 2. $n=p_1\ldots p_r$ with $r\gs2$, where $p_1,\ldots,p_r$ are distinct primes.
\smallskip

By the Chinese Remainder Theorem,
\begin{align*}\sum_{j=0}^{n-1}\l(\f jn\r)\l(\f{i^2+cij+dj^2}n\r)
=&\sum_{j=0}^{n-1}\prod_{s=1}^r\l(\f j{p_s}\r)\l(\f{i^2+cij+dj^2}{p_s}\r)
\\=&\sum_{j_1=0}^{p_1-1}\ldots\sum_{j_r=0}^{p_r-1}\prod_{s=1}^r\l(\f {j_s}{p_s}\r)\l(\f{i^2+cij_s+dj_s^2}{p_s}\r)
\end{align*}
and hence
\begin{equation}\label{2.3}\sum_{j=0}^{n-1}\l(\f jn\r)\l(\f{i^2+cij+dj^2}n\r)
=\prod_{s=1}^r\sum_{j_s=0}^{p_s-1}\l(\f{j_s}{p_s}\r)\l(\f{i^2+cij_s+dj_s^2}{p_s}\r).
\end{equation}
Similarly,
\begin{align*}&\sum_{j=0}^{n-1}\l(\f {-j}n\r)\l(\f{i^2+2cij+(c^2-4d)j^2}n\r)
\\=&\prod_{s=1}^r\sum_{j_s=0}^{p_s-1}\l(\f{-j_s}{p_s}\r)\l(\f{i^2+2cij_s+(c^2-4d)j_s^2}{p_s}\r).
\end{align*}
Thus, \eqref{1.3} holds in view of Case 1. This concludes the proof. \qed

\section{Proofs of Theorems 1.3 and 1.1}
\setcounter{lemma}{0}
\setcounter{theorem}{0}
\setcounter{corollary}{0}
\setcounter{remark}{0}
\setcounter{equation}{0}

\begin{lemma}\label{Lem3.1} Let $p>3$ be a prime.
If $p\eq1,3\pmod 8$ and $p=x^2+2y^2$ with $x,y\in\Z$ and $x\eq1\pmod4$, then
$$ \sum_{k=0}^{p-1}\f{\bi{4k}{2k}\bi{2k}k}{128^k}\eq(-1)^{\lfloor(p+5)/8\rfloor}\l(2x-\f p{2x}\r)\pmod {p^2}.$$
If $(\f{-2}p)=-1$, i.e., $p\eq5,7\pmod 8$, then
$$\sum_{k=0}^{p-1}\f{\bi{4k}{2k}\bi{2k}k}{128^k}\eq0\pmod {p^2}.$$
\end{lemma}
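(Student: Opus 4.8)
The plan is to split according to the two cases, which are of very different depth. Throughout write $S=\sum_{k=0}^{p-1}\f{\bi{4k}{2k}\bi{2k}k}{128^k}$, and note that $128=2\cdot64$, so $S$ is the value at $x=\f12$ of the one-parameter family $\sum_{k=0}^{p-1}\f{\bi{4k}{2k}\bi{2k}k}{64^k}x^k$ occurring in Lemma 2.2; equivalently, via the generating identity $\sum_{k\gs0}\bi{4k}{2k}\bi{2k}k z^k={}_2F_1(\f14,\f34;1;64z)$, the sum $S$ is a truncation of the hypergeometric series ${}_2F_1(\f14,\f34;1;\f12)$ at the symmetric argument $z=\f12$, the fixed point of the involution $z\leftrightarrow1-z$ underlying \eqref{2.2}. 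The dichotomy according to $(\f{-2}p)$ reflects complex multiplication by $\Q(\sqrt{-2})$: the residue case $p\eq1,3\pmod8$ forces an evaluation in terms of $p=x^2+2y^2$, whereas the non-residue case $p\eq5,7\pmod8$ is supersingular and the sum collapses modulo $p^2$.

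I would dispose of the non-residue case first, as it is immediate from Lemma 2.2. Since the congruence \eqref{2.2} holds in $\Z_p[x]$, I substitute $x=\f12$ (legitimate because $p$ is odd), so that $x^k=(1-x)^k=2^{-k}$ and $64^k2^k=128^k$. This collapses \eqref{2.2} to $\l(1-(\f{-2}p)\r)S\eq0\pmod{p^2}$ for $p>3$, where $\da_{p,3}=0$. When $(\f{-2}p)=-1$ the prefactor equals $2$, a unit modulo $p^2$, and therefore $S\eq0\pmod{p^2}$, which is the second assertion.

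For the residue case the same substitution gives only the trivial $0\eq0$, so a genuine evaluation of $S$ is needed. Here the plan is to pass to Gauss and Jacobi sums: reducing $\bi{4k}{2k}\bi{2k}k/128^k$ through the Gross--Koblitz formula and the $p$-adic Gamma function expresses $S$ modulo $p^2$ in terms of a Jacobi sum $J(\chi,\chi)$ for a quartic character $\chi$ modulo $p$. For $p=x^2+2y^2$ this Jacobi sum lies in $\Z[\sqrt{-2}]$, and the governing Frobenius is the unit root $\al=x+y\sqrt{-2}$ of $T^2-2xT+p$; since $\al\eq2x\pmod p$ one gets $\ov\al=p/\al\eq\f p{2x}\pmod{p^2}$, whence $S\eq\al=2x-\ov\al\eq2x-\f p{2x}\pmod{p^2}$, up to an overall sign and the choice of sign of $x$ fixed by $x\eq1\pmod4$. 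Alternatively, one may bypass the $p$-adic machinery and invoke the known supercongruence of exactly this shape, of the type obtained by Z.-H. Sun and Z.-W. Sun for sums $\sum_{k}\bi{2k}k\bi{4k}{2k}/m^k$ modulo $p^2$, after matching the parameter $m=128$.

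The main obstacle is precisely this residue-case evaluation to second order in $p$, together with the sign $(-1)^{\lfloor(p+5)/8\rfloor}$ and the normalization $x\eq1\pmod4$. The congruence modulo $p$, which only recovers the trace $2x$, is comparatively routine; promoting it to the exact value $2x-\f p{2x}$ modulo $p^2$ with the correct sign is the delicate part, resting on Gauss's sign determination for the quartic Gauss sum and on controlling the $p$-adic Gamma expansions to the required order. A convenient simplification would be a quadratic hypergeometric transformation turning $S$ into a classical central-binomial sum $\sum_{k}\bi{2k}k^2/m^k$ modulo $p^2$, whose evaluation in terms of $x$ with $p=x^2+2y^2$ is already on record.
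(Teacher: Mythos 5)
Your proof of the second assertion is correct and is exactly the paper's route: the paper does not reprove this case but cites Z.-W. Sun \cite[Corollary 1.3]{S13}, which is obtained precisely by substituting $x=1/2$ into \eqref{2.2}, as you do. Since $1/2\in\Z_p$ and $64^k\cdot 2^k=128^k$, the polynomial congruence specializes to
$\l(1-\l(\f{-2}p\r)\r)\sum_{k=0}^{p-1}\f{\bi{4k}{2k}\bi{2k}k}{128^k}\eq0\pmod{p^2}$
for $p>3$, and the factor $2$ is a unit when $\l(\f{-2}p\r)=-1$. This part is complete.

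For the first assertion you should know that the paper does not prove it either: the remark following the lemma records that it was conjectured by Z.-W. Sun \cite{S11} and confirmed by Z.-H. Sun \cite[Theorem 4.3]{Su}, and the paper simply quotes that theorem. Your fallback option (``invoke the known supercongruence of exactly this shape'') therefore coincides with what the paper actually does, and with that reading your proposal is in line with the paper. Your primary route, however --- Gross--Koblitz, $p$-adic Gamma expansions, and the unit root $\al=x+y\sqrt{-2}$ of $T^2-2xT+p$ --- is a program rather than a proof. The unit-root arithmetic you sketch is sound ($\al\eq 2x\pmod p$, hence $\ov\al=p/\al\eq p/(2x)\pmod{p^2}$ and $\al=2x-\ov\al\eq 2x-p/(2x)\pmod{p^2}$), but the two steps you yourself flag as ``delicate'' are exactly the content of Z.-H. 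Sun's theorem: (a) showing that the truncated sum is congruent modulo $p^2$, not merely modulo $p$, to that unit root --- this requires the $p$-adic Gamma expansions to second order, and no intermediate identity in your sketch is actually established; and (b) the determination of the sign $(-1)^{\lfloor(p+5)/8\rfloor}$ and its compatibility with the normalization $x\eq1\pmod 4$. Asserting these as targets does not discharge them, so as a self-contained argument the residue case has a genuine gap; as a citation it matches the paper.
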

\begin{remark} The first assertion in Lemma \ref{Lem3.1} was conjectured by Z.-W. Sun \cite{S11} and confirmed by his twin brother Z.-H. Sun \cite[Theorem 4.3]{Su}. The second assertion was proved by Z.-W. Sun \cite[Corollary 1.3]{S13} as a consequence
of \eqref{2.2} with $x=1/2$.
\end{remark}

\begin{lemma}\label{Lem3.2} Let $p$ be an odd prime and let $c,d,i\in\Z$ with $p\nmid d$. Then
\begin{equation}\label{3.1}\sum_{j=0}^{p-1}\l(\f jp\r)\l(\f{i^2+3cij+dj^2}p\r)=\l(\f ip\r)\sum_{x=0}^{p-1}\l(\f{x^3-(3c^2-d)x+c(2c^2-d)}p\r).
\end{equation}
\end{lemma}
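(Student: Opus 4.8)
The plan is to reduce the character sum $\sum_{j}\l(\f jp\r)\l(\f{i^2+3cij+dj^2}p\r)$ to a sum of the Legendre symbol of a single cubic polynomial, the shape $\sum_{x}\l(\f{x^3+ax+b}p\r)$, by a suitable substitution. First I would dispose of the trivial case $p\mid i$: then the polynomial $i^2+3cij+dj^2$ reduces to $dj^2$, so $\l(\f{dj^2}p\r)=\l(\f dp\r)$ for $p\nmid j$, and the factor $\l(\f jp\r)$ makes the sum telescope to $\l(\f dp\r)\sum_{j=1}^{p-1}\l(\f jp\r)=0$; one checks the right-hand side of \eqref{3.1} also vanishes, using $p\nmid d$ to see the cubic is not identically a square (or a direct parity argument). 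So assume $p\nmid i$.

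Assuming $p\nmid i$, I would use the homogeneity exactly as in the proof of Lemma 2.1: since $\{ir:\ r=0,\ldots,p-1\}$ runs over a complete residue system modulo $p$, substituting $j=ir$ gives
$$\sum_{j=0}^{p-1}\l(\f jp\r)\l(\f{i^2+3cij+dj^2}p\r)=\l(\f{i^3}p\r)\sum_{r=0}^{p-1}\l(\f rp\r)\l(\f{1+3cr+dr^2}p\r)=\l(\f ip\r)\sum_{r=0}^{p-1}\l(\f {r(1+3cr+dr^2)}p\r).$$
Thus the claim reduces to the Legendre-symbol identity
$$\sum_{r=0}^{p-1}\l(\f {r+3cr^2+dr^3}p\r)=\sum_{x=0}^{p-1}\l(\f{x^3-(3c^2-d)x+c(2c^2-d)}p\r),$$
after noting $\l(\f{r(1+3cr+dr^2)}p\r)=\l(\f{r^3(r^{-1}+3c+dr)}p\r)$ for $p\nmid r$, i.e. the sum equals $\sum_r\l(\f{dr^3+3cr^2+r}p\r)$.

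The key step is to recognize that both sides are character sums of a cubic, and that any cubic $\sum_{r}\l(\f{g(r)}p\r)$ is invariant (or transforms by an explicit factor) under affine substitutions $r\mapsto \al r+\be$ on the variable; depressing the cubic $dr^3+3cr^2+r$ by the Tschirnhaus shift that removes the quadratic term, and rescaling the leading coefficient to $1$ via $r\mapsto$ a constant multiple, should transform it into exactly $x^3-(3c^2-d)x+c(2c^2-d)$. The main obstacle I anticipate is bookkeeping the scaling factors: rescaling $r$ by a constant $\la$ pulls out a factor $\l(\f{\la}p\r)$ in front of the whole sum, and one must check these Jacobi-symbol factors conspire to cancel so that the two sums are genuinely equal (not merely equal up to sign). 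I would verify this by tracking the leading coefficient $d$ and the shift $\be=-c/d$ carefully; a convenient route is to first normalize the cubic $dr^3+3cr^2+r$ to the monic reciprocal-friendly form and match discriminants/invariants, or simply to substitute $r = (x-c)/d$-type expressions and compare coefficients directly. This coefficient matching is routine but is exactly where the identity in \eqref{3.1} is pinned down.
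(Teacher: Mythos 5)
Your proof follows essentially the same route as the paper's: dispose of $p\mid i$, use the substitution $j=ir$ to reduce the double-symbol sum to a single cubic character sum, then apply an affine change of variable. The only real difference is ordering: the paper first multiplies the quadratic form by $d$ (free of charge, since it costs $\left(\frac{d}{p}\right)^2=1$), so its substitution produces the monic cubic $r^3+3cr^2+dr$ and only the shift $r\mapsto x-c$ remains; you arrive at $dr^3+3cr^2+r$ and need the substitution $r=(x-c)/d$, after which multiplying the argument by the square $d^2$ yields $(x-c)^3+3c(x-c)^2+d(x-c)=x^3-(3c^2-d)x+c(2c^2-d)$ exactly, with no residual symbol factor. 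So the cancellation you flagged as the main obstacle does occur, for the simple reason that the rescaling happens inside the argument of the symbol and costs only $\left(\frac{d^2}{p}\right)=1$.

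One correction: when $p\mid i$, the right-hand side of \eqref{3.1} vanishes simply because the prefactor $\left(\frac{i}{p}\right)=\left(\frac{0}{p}\right)=0$. The cubic sum $\sum_{x=0}^{p-1}\left(\frac{x^3-(3c^2-d)x+c(2c^2-d)}{p}\right)$ itself is in general \emph{nonzero} (up to sign it is the trace of Frobenius of an elliptic curve), so the justification you sketch there (``the cubic is not identically a square,'' or a parity argument) would not establish its vanishing --- fortunately, no such claim is needed.
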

\Proof. Both sides of \eqref{3.1} vanish if $p\mid i$. Below we assume $p\nmid i$.

Clearly,
\begin{align*}&\sum_{j=0}^{p-1}\l(\f jp\r)\l(\f{i^2+3cij+dj^2}p\r)
\\=&\sum_{j=0}^{p-1}\l(\f {dj}p\r)\l(\f{di^2+3ci(dj)+(dj)^2}p\r)
\\=&\sum_{k=0}^{p-1}\l(\f kp\r)\l(\f{k^2+3cik+di^2}p\r)=\sum_{r=0}^{p-1}\l(\f {ir}p\r)\l(\f{(ir)^2+3ci^2r+di^2}p\r)
\\=&\l(\f ip\r)\sum_{r=0}^{p-1}\l(\f rp\r)\l(\f{r^2+3cr+d}p\r)
\end{align*}
and
\begin{align*}\sum_{r=0}^{p-1}\l(\f{r^3+3cr^2+dr}p\r)=&\sum_{x=0}^{p-1}\l(\f{(x-c)^3+3c(x-c)^2+d(x-c)}p\r)
\\=&\sum_{x=0}^{p-1}\l(\f{x^3+(d-3c^2)x+c(2c^2-d)}p\r).
\end{align*}
So \eqref{3.1} holds. \qed

\begin{lemma}\label{Lem3.3} Let $p$ be any odd prime and let $i\in\Z$.

{\rm (i)} We have
\begin{equation}\label{3.2}\begin{aligned}&\sum_{j=0}^{p-1}\l(\f jp\r)\l(\f{i^2+4ij+2j^2}p\r)
\\=&\begin{cases}(-1)^{\lfloor(p-3)/8\rfloor}(\f ip)2x&\t{if}\ p=x^2+2y^2\ (x,y\in\Z\ \&\ 4\mid x-1),\\0&\t{if}\ (\f{-2}p)=-1,\ \t{i.e.},\ p\eq5,7\pmod8.\end{cases}
\end{aligned}
\end{equation}

{\rm (ii)} We have
\begin{equation}\label{3.3}\begin{aligned}&\sum_{j=0}^{p-1}\l(\f jp\r)\l(\f{i^2+3ij+2j^2}p\r)
\\=&\begin{cases}-(\f{2i}p)2x&\t{if}\ p=x^2+4y^2\ (x,y\in\Z\ \&\ 4\mid x-1),\\0&\t{if}\ (\f{-1}p)=-1,\ \t{i.e.},\ p\eq3\pmod4.\end{cases}
\end{aligned}
\end{equation}
Also,
\begin{equation}\label{3.4}\begin{aligned}&\sum_{j=0}^{p-1}\l(\f jp\r)\l(\f{i^2+3ij+3j^2}p\r)
\\=&\begin{cases}-(\f{-i}p)2x&\t{if}\ p=x^2+3y^2\ (x,y\in\Z\ \&\ 3\mid x-1),\\0&\t{if}\ (\f{-3}p)\not=1,\ \t{i.e.},\ p\eq0,2\pmod3,\end{cases}
\end{aligned}
\end{equation}
and
\begin{equation}\label{3.5}\begin{aligned}&\sum_{j=0}^{p-1}\l(\f jp\r)\l(\f{i^2+21ij+112j^2}p\r)
\\=&\begin{cases}-(\f{i}p)2x&\t{if}\ p=x^2+7y^2\ (x,y\in\Z\ \&\ (\f x7)=1),\\0&\t{if}\ (\f{-7}p)\not=1,\ \t{i.e.},\ p\eq0,3,5,6\pmod7.\end{cases}
\end{aligned}
\end{equation}
\end{lemma}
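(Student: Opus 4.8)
The plan is to convert each of the four character sums into a single Legendre-symbol sum over the values of a cubic polynomial, and then to recognize that cubic as an elliptic curve with complex multiplication, for which the sum is classically known. In \eqref{3.3}, \eqref{3.4} and \eqref{3.5} the coefficients of $ij$ are $3$, $3$ and $21$, all divisible by $3$, so Lemma \ref{Lem3.2} applies directly with $(c,d)=(1,2)$, $(1,3)$ and $(7,112)$. That lemma requires $p\nmid d$, so I first dispose of $p=3$ (in \eqref{3.4}) and $p=7$ (in \eqref{3.5}): there the $ij$- and $j^2$-terms vanish modulo $p$ and the sum collapses to $(\f{i^2}p)\sum_{j=0}^{p-1}(\f jp)=0$, which is the asserted value. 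For the other $p$, Lemma \ref{Lem3.2} turns the three sums into
$$\l(\f ip\r)\sum_{x=0}^{p-1}\l(\f{x^3-x}p\r),\qquad \l(\f ip\r)\sum_{x=0}^{p-1}\l(\f{x^3-1}p\r),\qquad \l(\f ip\r)\sum_{x=0}^{p-1}\l(\f{x^3-35x-98}p\r),$$
so everything reduces to the cubic character sums attached to $y^2=x^3-x$, $y^2=x^3-1$ and $y^2=x^3-35x-98$, whose complex multiplication is by $\Z[i]$, $\Z[\omega]$ and the order of discriminant $-7$ (the discriminants being divisible only by $2$, $3$ and $2\cdot7$).

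I would treat the vanishing cases first, since two of them are elementary. For $y^2=x^3-x$ with $p\eq3\pmod4$, the substitution $x\mapsto-x$ together with $(\f{-1}p)=-1$ gives $\sum_x(\f{x^3-x}p)=(\f{-1}p)\sum_x(\f{x^3-x}p)$, forcing the sum to be $0$. For $y^2=x^3-1$ with $p\eq2\pmod3$, the map $x\mapsto x^3$ permutes $\F_p$, so $\sum_x(\f{x^3-1}p)=\sum_u(\f{u-1}p)=0$. In the nonvanishing ranges I would invoke the classical evaluations of these Jacobsthal/Jacobi-type sums: writing $p=x^2+4y^2$, $p=x^2+3y^2$, $p=x^2+7y^2$ with the normalizations $4\mid x-1$, $3\mid x-1$, $(\f x7)=1$, the three cubic sums equal $-(\f2p)2x$, $-(\f{-1}p)2x$ and $-2x$ respectively. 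Multiplying by $(\f ip)$ and using $(\f ip)(\f2p)=(\f{2i}p)$ and $(\f ip)(\f{-1}p)=(\f{-i}p)$ recovers precisely the right-hand sides of \eqref{3.3}--\eqref{3.5}; the congruence condition on $x$ is exactly what pins the overall sign, as one checks on a small case such as $p=5$ or $p=11$.

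The sum \eqref{3.2} is handled apart, because its $ij$-coefficient $4$ is not a multiple of $3$. Here I would apply Lemma \ref{Lem2.1} with $(c,d)=(4,2)$, giving
$$\sum_{j=0}^{p-1}\l(\f jp\r)\l(\f{i^2+4ij+2j^2}p\r)\eq-\l(\f ip\r)\sum_{k=0}^{p-1}\f{\bi{4k}{2k}\bi{2k}k}{128^k}\pmod p,$$
and then Lemma \ref{Lem3.1} to evaluate the binomial sum: it is $\eq0\pmod{p^2}$ when $(\f{-2}p)=-1$, and is $\eq(-1)^{\lfloor(p+5)/8\rfloor}2x\pmod p$ when $p=x^2+2y^2$ with $4\mid x-1$. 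Since $\lfloor(p+5)/8\rfloor=\lfloor(p-3)/8\rfloor+1$, the sign $-(-1)^{\lfloor(p+5)/8\rfloor}$ becomes $(-1)^{\lfloor(p-3)/8\rfloor}$, as in \eqref{3.2}. These two lemmas pin \eqref{3.2} only modulo $p$, so to obtain equality I would note that its left-hand side is $(\f ip)$ times a cubic character sum, hence of absolute value at most $2\sqrt p$ by Hasse--Weil, while $|2x|\ls2\sqrt p$ since $p\gs x^2$; the integer difference between the two sides is then a multiple of $p$ smaller than $p$ in absolute value once $p>16$, so it vanishes, and the finitely many small primes are checked directly.

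The step I expect to be the genuine obstacle is the $\sqrt{-7}$ case of \eqref{3.5}, the only one lacking an elementary symmetry. Showing that $x^3-35x-98$ is supersingular exactly when $(\f{-7}p)=-1$, and that in the split case its character sum equals $-2x$ under the normalization $(\f x7)=1$, requires the arithmetic of Jacobi sums in $\Q(\sqrt{-7})$ (or a citation to the known evaluation for this CM curve), rather than the $x\mapsto-x$ and $x\mapsto x^3$ arguments that settle \eqref{3.3} and \eqref{3.4}.
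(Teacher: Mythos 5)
Your proposal is correct and follows essentially the paper's own route: \eqref{3.2} from Lemma \ref{Lem2.1} combined with Lemma \ref{Lem3.1}, and \eqref{3.3}--\eqref{3.5} from Lemma \ref{Lem3.2} followed by the classical CM evaluations (the paper cites Berndt--Evans--Williams for the sums attached to $x^3-4x$ and $x^3-8$, and Rajwade for $x^3+21x^2+112x$; your depressed forms $x^3-x$, $x^3-1$, $x^3-35x-98$ agree with these after scaling by $2$, respectively shifting by $7$, and your signs match). The one substantive divergence is the finishing step of \eqref{3.2}: the paper upgrades the congruence modulo $p$ to an equality by a parity argument --- the left side is an even integer of absolute value less than $p$ (the number of vanishing terms among $1\ls j\ls p-1$ is $0$ or $2$), the right side is even, and $p$ is odd, so two sides congruent mod $p$ and each smaller than $p$ in absolute value must coincide --- whereas you bound both sides by $2\sqrt p$ via Hasse--Weil, which settles only $p>16$ and leaves $p=3,11$ to a direct check. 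Both finishes are valid, but the paper's is more elementary and uniform. Your symmetry proofs of the vanishing cases ($x\mapsto -x$ for $x^3-x$ when $p\eq3\pmod 4$; $x\mapsto x^3$ for $x^3-1$ when $p\eq2\pmod 3$) and your collapse arguments at $p=3$ and $p=7$ are correct and slightly more self-contained than the paper, which absorbs those cases into the cited results.
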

\begin{remark}\label{Rem3.2} It is well known that any prime $p\eq1\pmod4$ can be written as $x^2+4y^2$ with $x,y\in\Z$.
Also, for each $m\in\{2,3,7\}$ any odd prime $p$ with $(\f{-m}p)=1$ can be written  $x^2+my^2$ with $x,y\in\Z$
(cf. \cite{Co}).
\end{remark}

\medskip
\noindent{\it Proof of Lemma 3.3}. It is easy to verify that \eqref{3.2}-\eqref{3.5} hold for $p=3$.
Below we assume $p>3$.

(i) As $16\times4^2/2=128$, combining Lemma 2.1 and Lemma 3.1 we find that
$$\begin{aligned}&\sum_{j=0}^{p-1}\l(\f jp\r)\l(\f{i^2+4ij+2j^2}p\r)
\\\eq&\begin{cases}(-1)^{\lfloor(p-3)/8\rfloor}(\f ip)2x\pmod p&\t{if}\ p=x^2+2y^2\ (x,y\in\Z\ \&\ 4\mid x-1),\\0\pmod p&\t{if}\ (\f{-2}p)=-1,\ \t{i.e.},\ p\eq5,7\pmod8.\end{cases}
\end{aligned}$$
Observe that
$$\sum_{j=0}^{p-1}\l(\f jp\r)\l(\f{i^2+4ij+2j^2}p\r)=\sum_{j=1}^{p-1}\l(\f jp\r)\l(\f{i^2+4ij+2j^2}p\r)$$
is even (since $|\{1\ls j\ls p-1:\ i^2+4ij+2j^2\eq0\pmod p\}|\in\{0,2\}$), and its absolute value is smaller than $p$. If $p=x^2+2y^2$ with $x,y\in\Z$ and $x\eq1\pmod4$,
then $|2x|<2\sqrt p<p$. So \eqref{3.2} holds.

(ii) In light of Lemma 3.2,
\begin{align*}&\sum_{j=0}^{p-1}\l(\f jp\r)\l(\f{i^2+3ij+2j^2}p\r)
\\=&\l(\f ip\r)\sum_{r=0}^{p-1}\l(\f{r^3-r}p\r)
=\l(\f{2i}p\r)\sum_{r=0}^{p-1}\l(\f{2r}p\r)\l(\f{4r^2-4}p\r)
\\=&\l(\f{2i}p\r)\sum_{s=0}^{p-1}\l(\f{s^3-4s}p\r)
\end{align*}
and
\begin{align*}&\sum_{j=0}^{p-1}\l(\f jp\r)\l(\f{i^2+3ij+3j^2}p\r)
\\=&\l(\f ip\r)\sum_{r=0}^{p-1}\l(\f{r^3-1}p\r)
=\l(\f{2i}p\r)\sum_{r=0}^{p-1}\l(\f{(2r)^3-8}p\r)
\\=&\l(\f{2i}p\r)\sum_{s=0}^{p-1}\l(\f{s^3-8}p\r).
\end{align*}
On the other hand, by \cite[Theorem 6.2.9]{BEW} and \cite[pp.\,195-196]{BEW},
$$\sum_{s=0}^{p-1}\l(\f{s^3-4s}p\r)=\begin{cases}-2x&\t{if}\ p=x^2+4y^2\ (x,y\in\Z\ \&\ 4\mid x-1),
\\0&\t{if}\ p\eq3\pmod4,\end{cases}$$
and
$$\sum_{s=0}^{p-1}\l(\f{s^3-8}p\r)=\begin{cases}-2x(\f{-2}p)&\t{if}\ p=x^2+3y^2\ (x,y\in\Z\ \&\ 3\mid x-1),
\\0&\t{if}\ p\eq2\pmod3.\end{cases}$$
So we have \eqref{3.3} and \eqref{3.4}.

Now we prove \eqref{3.5}. Clearly, \eqref{3.5} is valid if $p\mid i$ or $p=7$. Below we assume that $p\nmid i$ and $p\not=7$. Observe that
\begin{align*}&\sum_{j=0}^{p-1}\l(\f jp\r)\l(\f{i^2+21ij+112j^2}p\r)
\\=&\sum_{r=0}^{p-1}\l(\f{112ir}p\r)\l(\f{112i^2+21i^2(112r)+(112ir)^2}p\r)
\\=&\l(\f{i}p\r)\sum_{s=0}^{p-1}\l(\f{s^3+21s^2+112s}p\r).
\end{align*}
By a result of Rajwade \cite{R},
$$\sum_{s=0}^{p-1}\l(\f{s^3+21s^2+112s}p\r)
=\begin{cases}-2x&\t{if}\ p=x^2+7y^2\ (x,y\in\Z\ \&\ (\f x7)=1),\\0&\t{if}\ (\f{-7}p)=-1.\end{cases}$$
Therefore \eqref{3.5} holds.

The proof of Lemma 3.3 is now complete. \qed

\medskip
\noindent{\it Proof of Theorem 1.3}. Write $n=p_1\ldots p_r$ with $p_1,\ldots,p_r$ distinct primes.
In light of \eqref{2.3} and Lemma 3.3(i), if $-2\ R\ n$ fails (i.e., $(\f{-2}{p_s})=-1$ for some $s=1,\ldots,r$) then
$$\sum_{j=0}^{n-1}\l(\f jn\r)\l(\f{i^2+4ij+2j^2}n\r)=\prod_{s=1}^r\sum_{j=0}^{p_s-1}\l(\f{j_s}{p_s}\r)\l(\f{i^2+4ij_s+2j_s^2}{p_s}\r)=0,$$
Thus \eqref{1.5} holds.
Note that if $-2\ R\ n$ then for each $s=1,\ldots,r$ we may write $p_s=x_s^2+2y_s^2$ with $x_s,y_s\in\Z$ and $x_s\eq1\pmod 4$ and hence
$$\sum_{j=0}^{n-1}\l(\f jn\r)\l(\f{i^2+4ij+2j^2}n\r)=\prod_{s=1}^r\l(-1)^{\lfloor(p_s-3)/8\rfloor}\l(\f {i}{p_s}\r)2x_s\r).$$

Similarly, \eqref{1.4}, \eqref{1.6} and \eqref{1.7} also hold in view of \eqref{2.3}
and Lemma 3.3(ii). This concludes our proof of Theorem \ref{Th1.3}. \qed

\medskip
\noindent{\it Proof of Theorem 1.1}. Suppose that $n=\prod_{s=1}^rp_s^{a_s}$, where $p_1,\ldots,p_r$ are distinct primes and $a_1,\ldots,a_r$ are positive integers. If $a_t>1$ with $1\ls t\ls r$, then $n/p_t\eq0\pmod{p_1\ldots p_r}$ and hence for any $i\in\Z$ we have
\begin{align*}\l(\f{i^2+cij+dj^2}{n}\r)=&\prod_{s=1}^r\l(\f{i^2+cij+dj^2}{p_s}\r)^{a_s}
\\=&\prod_{s=1}^r\l(\f{(i+n/p_t)^2+c(i+n/p_t)j+dj^2}{p_s}\r)^{a_s}
\\=&\l(\f{(i+n/p_t)^2+c(i+n/p_t)j+dj^2}{n}\r)
\end{align*}
for all $j=0,\ldots,n-1$. Therefore
$$(c,d)_n=[c,d]_n=0.$$

Below we assume that $n$ is squarefree. If $-1\ R\ n$ fails, then by Theorems \ref{Th1.2} and \ref{Th1.3}
we have
$$\sum_{j=1}^{n-1}\l(\f jn\r)\l(\f{i^2+3ij+2j^2}n\r)=0=\sum_{j=1}^{n-1}\l(\f jn\r)\l(\f{i^2+6ij+j^2}n\r)$$
for all $i=0,\ldots,n-1$, hence $(3,2)_n=(6,1)_n=0$ and $[3,2]_n=[6,1]_n=0$. This proves part (i) of Theorem 1.1.
Similarly, parts (ii)-(iv) of Theorem 1.1 follow from Theorems \ref{Th1.2} and \ref{Th1.3}. This ends the proof. \qed

\section{Proof of Theorem 1.4}
\setcounter{lemma}{0}
\setcounter{theorem}{0}
\setcounter{corollary}{0}
\setcounter{remark}{0}
\setcounter{equation}{0}

Let $q>1$ be a prime power and let $\mathbb F_q$ be the finite field of order $q$. 
A multiplicative character $\chi$ on $\mathbb F_q$ is called {\it trivial} (or {\it principal})
if $\chi(a)=1$ for all $a\in\mathbb F_q^*=\mathbb F_q\sm\{0\}$. For a polynomial
$P(x)=\sum_{s=0}^nc_sx^s\in\mathbb F_q[x]$, we define the homogenous polynomial
\begin{equation}\label{4.1}P^*(x,y)=\sum_{s=0}^nc_sx^{n-s}y^{s}=x^nP\l(\f yx\r).\end{equation}
Fix a list of the elements of $\mathbb F_q$. For a multiplicative character $\chi$ on $\mathbb F_q$, we
introduce the matrices
\begin{equation}\label{4.2}
M(P,\chi)=[\chi(P^*(a,b))]_{a,b\in\mathbb F_q^*}\  \t{and} \ M_0(P,\chi)=[\chi(P^*(a,b))]_{a,b\in\mathbb F_q}.
\end{equation}

\begin{lemma}\label{lm:det1} Let $q>1$ be a prime power and let $\chi$ be a
nontrivial multiplicative
character on $\mathbb{F}_q$.
Suppose that
$P(x)\in \mathbb{F}_q[x]$ and $\sum_{x\in \mathbb{F}_q} \chi(xP(x))=0$.  Then $M(P,\chi)$ is singular
$($i.e., $\det M(P,\chi)=0)$.
If the character
$\chi^{n+1}$ is nontrivial with $n=\deg P$, then the matrix $M_0(P,\chi)$
is singular too.
\end{lemma}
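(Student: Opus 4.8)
The plan is to prove singularity by exhibiting an explicit nonzero vector in the kernel of each matrix, built directly from the character $\chi$ itself. The structural reason this works is that $P^*$ is homogeneous of degree $n$, so for $a,b\in\mathbb F_q^*$ one has $P^*(a,b)=a^nP(b/a)$ and hence $\chi(P^*(a,b))=\chi(a)^n\chi(P(b/a))$. After pulling the diagonal factor $\chi(a)^n$ out of each row, $M(P,\chi)$ becomes a group matrix over the cyclic group $\mathbb F_q^*$ whose $(a,b)$ entry depends only on $b/a$; such matrices are diagonalized by the multiplicative characters of $\mathbb F_q^*$, with eigenvalues $\sum_{t\in\mathbb F_q^*}\chi(P(t))\psi(t)$ as $\psi$ runs over those characters. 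The hypothesis $\sum_{x\in\mathbb F_q}\chi(xP(x))=0$ is exactly the statement that the eigenvalue attached to $\psi=\chi$ vanishes, which is what pinpoints the kernel vector.

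Concretely, first I would set $v=(\chi(b))_{b\in\mathbb F_q^*}$ and compute, for each $a\in\mathbb F_q^*$,
$$(Mv)_a=\sum_{b\in\mathbb F_q^*}\chi(P^*(a,b))\chi(b)=\sum_{b\in\mathbb F_q^*}\chi\big(b\,P^*(a,b)\big).$$
Substituting $t=b/a$ and using $b\,P^*(a,b)=a^{n+1}\,tP(t)$ gives $(Mv)_a=\chi(a)^{n+1}\sum_{t\in\mathbb F_q^*}\chi(tP(t))$. Since $\chi(0)=0$, the inner sum equals $\sum_{x\in\mathbb F_q}\chi(xP(x))$, which is $0$ by hypothesis; thus $Mv=0$. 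As $v\neq0$ (its entries are roots of unity and $\mathbb F_q^*\neq\emptyset$), the matrix $M(P,\chi)$ is singular.

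For $M_0(P,\chi)$ I would pad $v$ with a zero, i.e. take $w=(\chi(b))_{b\in\mathbb F_q}$ with $w_0=\chi(0)=0$. Because $w_0=0$, the column indexed by $0$ never contributes, so for every $a\in\mathbb F_q^*$ the value $(M_0w)_a$ coincides with $(Mv)_a=0$ computed above; this conveniently sidesteps the entries $\chi(P^*(a,0))$, which involve $P(0)$ and may vanish. It remains to treat the single new row $a=0$. Here I would use $P^*(0,b)=c_nb^n$ (only the leading term survives), so that
$$(M_0w)_0=\sum_{b\in\mathbb F_q^*}\chi(c_nb^n)\chi(b)=\chi(c_n)\sum_{b\in\mathbb F_q^*}\chi^{n+1}(b).$$
By orthogonality of characters this last sum is $0$ precisely because $\chi^{n+1}$ is nontrivial, giving $(M_0w)_0=0$ and hence $M_0w=0$ with $w\neq0$.

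The main obstacle, and the only place the extra hypothesis enters, is the border row $a=0$ of $M_0$: one must identify $P^*(0,b)=c_nb^n$ and recognize that the resulting quantity is a pure character sum $\chi(c_n)\sum_{b}\chi^{n+1}(b)$, so that nontriviality of $\chi^{n+1}$ is exactly the condition needed for it to vanish. The $\mathbb F_q^*\times\mathbb F_q^*$ block is essentially automatic once the homogeneity factorization is in hand, and the choice $w_0=0$ is what lets the same computation carry over without any analysis of the $b=0$ column or of the degenerate $(0,0)$ entry.
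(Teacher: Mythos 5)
Your proof is correct and essentially identical to the paper's own argument: the same kernel vector $v_b=\chi(b)$ (padded with $w_0=0$ for $M_0$), the same homogeneity computation giving $(Mv)_a=\chi(a)^{n+1}\sum_{t}\chi(tP(t))=0$, and the same treatment of the border row $a=0$ via $P^*(0,b)=c_nb^n$ and nontriviality of $\chi^{n+1}$. The group-matrix/diagonalization framing you give at the start is a nice conceptual gloss, but the actual verification coincides with the paper's proof step for step.
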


\begin{proof}
We introduce the column vector $v$ whose coordinates are $v_b=\chi(b)$ for
$b\in \mathbb{F}_q^*$. Let
$M=M(P,\chi)$. Then, for any $a\in\mathbb F_q^*$ we have
$$
(Mv)_a=\sum_{b\in\mathbb F_q^*} \chi\left(a^nP\left(a^{-1}b\right)\right)\chi(b)=
\chi(a^{n+1})
\sum_{b\in\mathbb F_q^*} \chi\left(a^{-1}bP\left(a^{-1}b\right)\right)=0.
$$
Since $v$ is a nonzero vector, the matrix $M$ is singular.

Now suppose that the degree of $P$ is $n$ and the character $\chi^{n+1}$ is nontrivial. Let $M_0=M_0(P,\chi)$
and introduce the vector $v$ with coordinates $v_b=\chi(b)$ for
$b\in \mathbb{F}_q$. Then $(M_0v)_a=0$ for all $a\in\mathbb F_q^*$ as before.
Let $c_n$ be the leading coefficient of the polynomial $P(x)$. Then
$$(M_0v)_0=\sum_{b\in\mathbb F_q} \chi(c_nb^n)\chi(b)=\chi(c_n)\sum_{b\in\mathbb F_q} \chi^{n+1}(b)=0.$$
Therefore $M_0v$ is the zero vector and hence $M_0$ is singular.
\end{proof}

Motivated by Lemma 4.1, we give the following more sophisticated lemma.

\begin{lemma}\label{lm:det2} Let $q>1$ be an odd prime power. Suppose that
$g\in \mathbb{F}_q$ is not a square and $\chi$ is a
nontrivial multiplicative
character on $\mathbb{F}_q$ with $\chi(-1)=1$.
Assume that $P(x)\in\mathbb{F}_q[x]$ and
\begin{equation}\label{4.3}\sum_{x\in\mathbb F_q} \chi(xP(x^2))=\sum_{x\in\mathbb F_q} \chi(xP(gx^2))=0.
\end{equation}

{\rm (i)} We have $\operatorname{dim}(\operatorname{Ker}(M(P,\chi)))\gs 2$, in particular $M(P,\chi)$ is singular.

{\rm (ii)} Assume that the character $\chi^{2n+1}$ with $n=\deg P$ is nontrivial. Then
$\operatorname{dim}(\operatorname{Ker}(M_0(P,\chi)))\gs 2$.
\end{lemma}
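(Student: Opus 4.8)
The plan is to adapt the kernel-vector method of Lemma 4.1, but to produce two independent kernel vectors rather than one, by exploiting the hypothesis $\chi(-1)=1$. First I would record the basic reduction: for a vector $v=(v_b)_{b\in\F_q^*}$ and $a\in\F_q^*$, using $P^*(a,b)=a^nP(b/a)$ and the substitution $b=ac$, one finds
$$(M(P,\chi)v)_a=\chi(a^n)\sum_{c\in\F_q^*}\chi(P(c))\,v_{ac}.$$
In particular, if $v_b=\rho(b)$ for a multiplicative character $\rho$, then $(M(P,\chi)v)_a=\chi(a^n)\rho(a)\,T(\rho)$, where $T(\rho):=\sum_{c\in\F_q^*}\chi(P(c))\rho(c)$. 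Hence $v$ lies in $\ker M(P,\chi)$ as soon as $T(\rho)=0$, and it suffices to exhibit two distinct characters $\rho$ with $T(\rho)=0$, since distinct characters give linearly independent vectors.

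The device for producing two such characters is the following. Because $\chi(-1)=1$ and $\F_q^*$ is cyclic of even order, $\chi$ is a square in the character group; I would fix $\psi$ with $\psi^2=\chi$, whose two square roots are $\psi$ and $\psi\eta$, with $\eta$ the quadratic character. The claim is that \eqref{4.3} forces $T(\psi)=T(\psi\eta)=0$. To see this I would substitute $c=x^2$ in the first sum: as $x$ runs over $\F_q^*$ each nonzero square $c$ is hit twice, $\chi(x)=\psi(x^2)=\psi(c)$ since $\psi^2=\chi$, and the two preimages $\pm x$ contribute equally because $\chi(-1)=1$. Writing the indicator of nonzero squares as $(1+\eta(c))/2$ gives
$$\sum_{x\in\F_q}\chi(xP(x^2))=T(\psi)+T(\psi\eta).$$
Substituting $c=gx^2$ in the second sum, and using that $g$ is a non-square (so $g\cdot(\text{squares})$ are exactly the non-squares, with indicator $(1-\eta(c))/2$) together with $\chi(x)=\psi(c)\psi(g)^{-1}$, gives
$$\sum_{x\in\F_q}\chi(xP(gx^2))=\psi(g)^{-1}\bigl(T(\psi)-T(\psi\eta)\bigr).$$
Since $\psi(g)\ne0$, the vanishing of both sums forces $T(\psi)=T(\psi\eta)=0$. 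This computation — selecting the square-root character $\psi$ and correctly bookkeeping the square/non-square split — is the technical heart of the argument and the step I expect to require the most care.

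With $T(\psi)=T(\psi\eta)=0$ established, part (i) follows at once: $v^{(1)}_b=\psi(b)$ and $v^{(2)}_b=\psi(b)\eta(b)$ both lie in $\ker M(P,\chi)$ and are linearly independent (distinct characters), so $\dim\ker M(P,\chi)\gs2$. For part (ii) I would extend both vectors by $v_0=0$, which is consistent with $\psi(0)=0$. For $a\ne0$ the coordinate $(M_0(P,\chi)v)_a$ agrees with the computation above and vanishes; for $a=0$ one uses $P^*(0,b)=c_nb^n$ (with $c_n$ the leading coefficient) to obtain
$$(M_0(P,\chi)v^{(1)})_0=\chi(c_n)\sum_{b\in\F_q}(\chi^n\psi)(b),\qquad (M_0(P,\chi)v^{(2)})_0=\chi(c_n)\sum_{b\in\F_q}(\chi^n\psi\eta)(b).$$
Here $\chi^n\psi=\psi^{2n+1}$ and $\chi^n\psi\eta=\psi^{2n+1}\eta$; since $(\psi^{2n+1})^2=(\psi^{2n+1}\eta)^2=\chi^{2n+1}$ is nontrivial by hypothesis, both $\psi^{2n+1}$ and $\psi^{2n+1}\eta$ are nontrivial, so both sums vanish. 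Hence $v^{(1)},v^{(2)}\in\ker M_0(P,\chi)$, giving $\dim\ker M_0(P,\chi)\gs2$.
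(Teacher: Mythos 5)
Your proposal is correct, but it follows a genuinely different route from the paper's proof. The paper never takes a square root of the character $\chi$; instead it constructs the explicit matrix $V=[v_{a,b}]_{a,b\in\F_q^*}$ with $v_{a,b}=\chi(\sqrt{ab})$ when $ab$ is a nonzero square and $v_{a,b}=0$ otherwise (well defined exactly because $\chi(\pm1)=1$), observes that $V$ has rank $2$, and proves $M(P,\chi)V=0$: the $(a,b)$-entry of $MV$ reduces to $\frac12\chi(a^nb)\sum_{d}\chi(P_{a^{-1}b}(d))$ with $P_c(x)=xP(cx^2)$, and this vanishes because every $c\in\F_q^*$ equals $c_0d^2$ with $c_0\in\{1,g\}$, so the inner sum rescales to one of the two hypothesis sums in \eqref{4.3}. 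Your device is instead to solve $\psi^2=\chi$ in the character group (possible precisely because $\chi(-1)=1$), note that any multiplicative character $\rho$ with $T(\rho)=\sum_{c}\chi(P(c))\rho(c)=0$ yields a kernel vector, and convert \eqref{4.3} by the square/non-square indicator trick into the invertible $2\times2$ system $T(\psi)+T(\psi\eta)=0$ and $\psi(g)^{-1}\bigl(T(\psi)-T(\psi\eta)\bigr)=0$. In fact the two arguments produce literally the same kernel subspace: the columns of the paper's $V$ indexed by $b=1$ and $b=g$ are $\frac12(v^{(1)}+v^{(2)})$ and $\frac12\psi(g)(v^{(1)}-v^{(2)})$ in your notation. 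What your version buys: it makes the role of the hypothesis $\chi(-1)=1$ conceptually transparent (it is exactly the solvability of $\psi^2=\chi$), it exhibits an explicit basis of characters for the kernel, and it shows \eqref{4.3} is \emph{equivalent} to $T(\psi)=T(\psi\eta)=0$, not merely sufficient. What the paper's version buys: it works entirely with square roots of field elements rather than of characters, so it runs in parallel with the single-vector Lemma 4.1 and lets the $M_0$ case in part (ii) reuse the $M$ computation verbatim, adding only the row $a=0$, which is handled by the nontriviality of $\chi^{2n+1}$ just as in your treatment via $(\psi^{2n+1})^2=(\psi^{2n+1}\eta)^2=\chi^{2n+1}$.
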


\begin{proof} For $a,b\in\mathbb{F}_q$, set
$$v_{a,b}:=\begin{cases}\chi(c)=\chi(\sqrt{ab})\ &\t{if}\ ab=c^2\ \t{for some}\ c\in\mathbb F_q,
\\0&\t{otherwise}.\end{cases}$$
This is well defined since $\chi(\pm1)=1$,
The matrix $V=[v_{a,b}]_{a,b\in\mathbb F_q^*}$ has
rank 2; in fact, if $b'=bc^2$ for some $c\in\mathbb F_q$
then columns $b$ and $b'$ in $V$ are proportional, but
columns 1 and $g$ are not proportional.

(i) Write $M$ for $M(P,\chi)$. It suffices to show that $MV$ is the zero matrix.
For $a,b\in\mathbb F_q^*$, the $(a,b)$-entry of the matric $MV$
is
\begin{align*}\sum_{c\in \mathbb F_q}\chi(P^*(a,c))v_{c,b}
=&\sum_{c\in \mathbb F_q\atop bc\ \t{is a square}}\chi\l(a^nP\l(a^{-1}c\r)\r)\chi(\sqrt{bc})
\\=&\f12\sum_{d\in \mathbb F_q}\chi\l(a^nP\l(a^{-1}bd^2\r)\r)\chi(bd)
\\=&\f12\chi(a^nb)\sum_{d\in \mathbb F_q}\chi\l(P_{a^{-1}b}(d)\r),
\end{align*}
where $P_c(x)=xP(cx^2)$ for any $c\in\mathbb F_q$.

Now it remains to show for any $c\in\mathbb F_q^*$ the identity
$$\sum_{x\in \mathbb F_q} \chi(P_c(x))=0.$$
 Clearly, $c=c_0d^2$ for some
$c_0\in\{1,g\}$ and $d\in\mathbb F_q^*$. Thus
\begin{align*}\sum_{x\in \mathbb F_q} \chi(P_c(x))=&\sum_{x\in\mathbb F_q}\chi(xP(c_0d^2x^2))
=\sum_{y\in\mathbb F_q}\chi(d^{-1}yP(c_0y^2))
\\=&\chi(d)^{-1}\sum_{y\in\mathbb F_q}\chi(P_{c_0}(y))=0.
\end{align*}
This proves part (i) of Lemma 4.2.

(ii)  Write $M_0$ for $M_0(P,\chi)$, and define $V_0=[v_{a,b}]_{a,b\in\mathbb F_q}$. (Note the slight difference
between $V_0$ and $V$.) The rank of $V_0$ is still equal to $2$, so it suffices to show that $M_0V_0$ is the zero matrix. Note that the $(a,b)$-entry of $M_0V_0$ is trivially zero if $b=0$ since $v_{c,0}=0$ for all $c\in\mathbb F_q$. For $a,b\not=0$ we can repeat the computation for $MV$ verbatim.
Let $c_n$ denote the leading coefficient of $P(x)$.
If $a=0$ and $b\not=0$, then the $(a,b)$-entry of $M_0V_0$ is
\begin{align*}
\sum_{c\in\mathbb{F}_q}\chi(P^*(0,c))v_{c,b}=&\sum_{\substack{c\in\mathbb{F}_q\\ bc \text{ is a square}}}\chi(c_nc^n)\chi(\sqrt{bc})
\\=&\frac{1}{2}\sum_{d\in\mathbb{F}^*_q}\chi(c_n(b^{-1}d^2)^nd)
=\frac{\chi(b^{-1}c_n)}{2}\sum_{d\in\mathbb{F}_q}\chi^{2n+1}(d).
\end{align*}
This is zero since $\chi^{2n+1}$ is nontrivial. We are done.
\end{proof}

\begin{theorem}\label{differenceofpowers}
Let $q>1$ be an odd prime power and let $m\in\Z^+$ with $\gcd(m,q-1)=1$.
Let $\chi$ be a nontrivial quadratic
character on $\mathbb{F}_q$, and let
\begin{equation}\label{4.4}P_m(x,a)=\sum_{k=0}^{m-1}\bi {2m}{2k+1}a^kx^{m-1-k}
\end{equation}
with $a\in\mathbb F_q^*=\mathbb F_q\sm\{0\}$. Then
\begin{equation}\label{4.5}\sum_{x\in \mathbb{F}_q} \chi(xP_m(gx^2,a))=0\quad\t{for all}\ g\in\mathbb F_q^*.
\end{equation}
If $\chi(-1)=1$, then both $M(P_m(x,a),\chi)$ and $M_0(P_m(x,a),\chi)$ are singular, and moreover either of them
has a kernel of dimension at least two.
\end{theorem}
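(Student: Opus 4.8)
The plan is to reduce the matrix statement, via Lemma 4.2, to proving that the displayed character sum vanishes for every $g$, and to evaluate that sum through the ``difference of powers'' identity that names the theorem. First I would record the polynomial identity
$$xP_m(x^2,a)=\frac{(x+t)^{2m}-(x-t)^{2m}}{2t}\qquad(t^2=a),$$
which follows at once from the binomial theorem by collecting the odd-index terms; it holds formally, hence in $\mathbb{F}_q$ or in $\mathbb{F}_{q^2}$ once a square root $t$ of $a$ is chosen. Replacing $x^2$ by $gx^2$ and factoring out $g^{m-1}$ gives $xP_m(gx^2,a)=g^{m-1}\,xP_m(x^2,a/g)$, so $\sum_{x}\chi(xP_m(gx^2,a))=\chi(g)^{m-1}T(a/g)$ with $T(b):=\sum_{x\in\mathbb{F}_q}\chi(xP_m(x^2,b))$. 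Since $a/g$ runs over $\mathbb{F}_q^*$ as $g$ does, everything comes down to showing $T(b)=0$ for every $b\in\mathbb{F}_q^*$. Note that $\gcd(m,q-1)=1$ forces $m$ odd, a fact used repeatedly below.

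The main device is the Möbius substitution $y=\frac{x+t}{x-t}$, under which $x-t=\frac{2t}{y-1}$ and hence $xP_m(x^2,b)=\frac{(2t)^{2m-1}(y^{2m}-1)}{(y-1)^{2m}}$. When $b$ is a square I would take $t\in\mathbb{F}_q^*$; then $y$ runs over $\mathbb{P}^1(\mathbb{F}_q)\setminus\{1\}$, the factor $(y-1)^{2m}$ is a square so $\chi$ ignores it, and $\chi(w)=\chi(2t)\chi(y^{2m}-1)$. The point $x=t$ (i.e. $y=\infty$) must be treated separately and contributes $\chi(2t)$, while the bijection $z=y^m$ (legitimate since $\gcd(m,q-1)=1$) reduces the remaining sum to the standard evaluation $\sum_{z\in\mathbb{F}_q}\chi(z^2-1)=-1$. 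The two contributions cancel: $T(b)=\chi(2t)\,[\,1+(-1)\,]=0$. This disposes of the square case for every nontrivial quadratic $\chi$; when $b$ is a non-square and $\chi(-1)=-1$, the sum vanishes trivially because $xP_m(x^2,b)$ is odd in $x$.

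The substantive case, which I expect to be the main obstacle, is $b$ a non-square with $\chi(-1)=1$ (equivalently $q\equiv1\pmod4$). Here $t\in\mathbb{F}_{q^2}\setminus\mathbb{F}_q$ with $t^q=-t$, and $y=\frac{x+t}{x-t}$ lies on the norm-one circle $C=\{c\in\mathbb{F}_{q^2}:c^{q+1}=1\}$, the map $x\mapsto y$ being a bijection from $\mathbb{F}_q$ onto $C\setminus\{1\}$. Since $w=xP_m(x^2,b)\in\mathbb{F}_q^*$, I would compute $\chi(w)=w^{(q-1)/2}$ inside $\mathbb{F}_{q^2}$, using the Frobenius relations $y^q=y^{-1}$ and $t^q=-t$ to obtain $(y-1)^{q-1}=-y^{-1}$, $(2t)^{q-1}=-1$, and to resolve the square-root ambiguity in $(y^{2m}-1)^{(q-1)/2}$. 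Careful sign bookkeeping collapses the expression to
$$\chi(w)=\chi(2)\,\psi_C(y^m)\,\chi\!\left(\frac{y^m-y^{-m}}{t}\right)=:\Phi(y),$$
where $\psi_C(u)=u^{(q+1)/2}\in\{\pm1\}$ is the quadratic character of $C$ and $\frac{y^m-y^{-m}}{t}\in\mathbb{F}_q$. The key point is then a symmetry: because $m$ is odd, $\gcd(m,q+1)$ is odd, so $-1\in C^m$ and there exists $\lambda\in C$ with $\lambda^m=-1$. Using $\psi_C(-1)=-1$ (valid since $q\equiv1\pmod4$) and $\chi(-1)=1$, one checks $\Phi(\lambda y)=-\Phi(y)$; since the translation $y\mapsto\lambda y$ permutes $C$, summing gives $T(b)=\chi(2)\sum_{y\in C}\Phi(y)=0$.

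With $T(b)=0$ for all $b$ in hand, the final sentence is immediate. For $\chi(-1)=1$ the hypotheses of Lemma 4.2 hold for $P=P_m(x,a)$: taking $g=1$ and any non-square $g$ yields $\sum_x\chi(xP(x^2))=\sum_x\chi(xP(gx^2))=0$, and for the $M_0$ assertion the character $\chi^{2n+1}=\chi$ is nontrivial because $\chi$ is quadratic. Hence both $M(P_m(x,a),\chi)$ and $M_0(P_m(x,a),\chi)$ are singular, each with a kernel of dimension at least two. The delicate step throughout is the non-square computation in $\mathbb{F}_{q^2}$: getting the signs in $w^{(q-1)/2}$ exactly right so that $\chi(w)$ acquires the clean form $\Phi(y)$ on which the translation symmetry produces the cancellation.
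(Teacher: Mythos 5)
Your proposal is correct, and I checked the step you flagged as delicate: with $(y-1)^{q-1}=-y^{-1}$, $(2t)^{q-1}=-1$ and $(y^m)^{(q-1)/2}=\psi_C(y^m)\,y^{-m}$, the quantity $w^{(q-1)/2}$ does collapse to your $\Phi(y)$, so the non-square computation goes through. But your route diverges from the paper's exactly at that case, and the paper is far more economical there. The paper makes the same two reductions (Lemma 4.2 plus $P_m(gx^2,a)=g^{m-1}P_m(x^2,ag^{-1})$) and handles square $a$ essentially as you do (difference of powers, the substitution $y=(x+1)/(x-1)$, the bijection $y\mapsto y^m$ permitted by $\gcd(m,q-1)=1$, and a complete character sum). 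For non-square $a$, however, the paper never leaves $\mathbb{F}_q$ and never splits on $\chi(-1)$: reindexing the sum by the inversion $x\mapsto ax^{-1}$ and using the palindromic symmetry $\binom{2m}{2k+1}=\binom{2m}{2(m-1-k)+1}$ shows $xP_m(x^2,a)$ picks up exactly the factor $a^mx^{-2m}$, whence $S=\chi(a)^mS=-S$ and $S=0$ in three lines. Your $\mathbb{F}_{q^2}$ argument on the norm-one circle $C$ is correct but strictly heavier, and it secretly contains the same symmetry: under your parametrization $y=(x+t)/(x-t)$, the inversion $x\mapsto a/x$ is precisely $y\mapsto-y$, and since $m$ is odd you may take $\lambda=-1$ outright, making your appeal to $\gcd(m,q+1)$ being odd (to produce $\lambda$ with $\lambda^m=-1$) unnecessary. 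What your route buys is a transparent geometric picture: a character sum over the circle killed by a translation; what the paper's buys is brevity and uniformity, since the inversion trick acts before any square root of $a$, any passage to $\mathbb{F}_{q^2}$, or any Frobenius sign bookkeeping enters.
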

\begin{proof} In view of Lemma 4.2, we only need to prove \eqref{4.5}.
As $P_m(gx^2,a)=g^{m-1}P_m(x^2,ag^{-1})$ for all $g\in\mathbb F_q^*$, it suffices to show that
\begin{equation}\label{4.6}
\sum_{x\in \mathbb{F}_q} \chi(xP_m(x^2,a))=0
\end{equation} for any $a\in\mathbb F_q^*$.

Clearly, $m$ is odd since $\gcd(m,q-1)=1$.
Recall that $\chi^2$ is the trivial character, and note that
\begin{align*}\sum_{x\in\mathbb F_q}\chi(xP_m(x^2,a))=&\sum_{x\in\mathbb F_q^*}\chi(ax^{-1}P_m((ax^{-1})^2,a))
\\=&\sum_{x\in\mathbb F_q^*}\chi\bg(\sum_{k=0}^{m-1}\bi{2m}{2(m-1-k)+1}a^{2m-1-k}x^{2k+1-2m}\bg)
\\=&\sum_{x\in\mathbb F_q^*}\chi\bg(\sum_{j=0}^{m-1}\bi{2m}{2j+1}a^{m+j}x^{-1-2j}\bg)
\\=&\sum_{x\in\mathbb F_q^*}\chi(a^mx^{-2m}xP_m(x^2,a))=\chi(a)^m\sum_{x\in\mathbb F_q}\chi(xP_m(x^2,a)).
\end{align*}
If $a$ is not a square in $\mathbb F_q$, then $\chi(a)^m=(-1)^m=-1$ and hence (\ref{4.6}) holds by the above.

Now assume that $a=b^2$ with $b\in\mathbb F_q^*$. Since
\begin{align*}\sum_{x\in\mathbb F_q}\chi(xP_m(x^2,a))=&\sum_{x\in\mathbb F_q}\chi(b^{2m-2}xP_m((b^{-1}x)^2,1))
=\chi(b)^{2m-1}\sum_{y\in\mathbb F_q}\chi(yP_m(y^2,1)),
\end{align*}
it remains to show that $\sum_{x\in\mathbb F_q}\chi(xP_m(x^2,1))=0$.
Since $\chi=\chi^{-1}$ and
$$2xP_m(x^2,1)=(x+1)^{2m}-(x-1)^{2m}=((x+1)^m+(x-1)^m)((x+1)^m-(x-1)^m),$$
we have
\begin{align*}&\sum_{x\in\mathbb F_q}\chi(2xP_m(x^2,1))
\\=&\chi(2^{2m})+\sum_{x\in\mathbb F_q\sm\{1\}}\chi((x+1)^m+(x-1)^m)\chi^{-1}((x+1)^m-(x-1)^m)
\\=&1+\sum_{x\in\mathbb F_q\sm\{1\}}\chi\l(\f{(x+1)^m+(x-1)^m}{(x+1)^m-(x-1)^m}\r)
\\=&1+\sum_{x\in\mathbb F_q\sm\{1\}}\chi\l(\f{(1+2/(x-1))^m+1}{(1+2/(x-1))^m-1}\r)
\\=&1+\sum_{y\in\mathbb F_q\sm\{1\}}\chi\l(\f{y^m+1}{y^m-1}\r)
=1+\sum_{y\in\mathbb F_q\sm\{1\}}\chi\l(\f{y+1}{y-1}\r)
\\=&1+\sum_{y\in\mathbb F_q\sm\{1\}}\chi\l(1+\f2{y-1}\r)=1+\sum_{z\in\mathbb F_q\sm\{1\}}\chi(z)=0
\end{align*}
Thus $\sum_{x\in\mathbb F_q}\chi(xP_m(x^2,1))=0$ as desired.

The proof of Theorem 4.1 is now complete. \end{proof}

\medskip
\noindent{\it Proof of Theorem 1.4(i)}. Let $p$ be any prime with $p\eq5\pmod{12}$, and let $\chi$
be the quadratic character of $\mathbb F_p=\Z/p\Z$ with $\chi(x+p\Z)=(\f xp)$ for all $x\in\Z$.
Note that $\chi(-1)=1$ since $p\eq1\pmod4$. Clearly,
$$P_3(x,3)=\bi 61x^2+\bi 633x+\bi 653^2=6(x^2+10x+9).$$
Applying Theorem 4.1, we obtain that
$$(10,9)_p=\det\l[\l(\f{i^2+10ij+9j^2}p\r)\r]_{1\ls i,j\ls p-1}=0$$
and
$$[10,9]_p=\det\l[\l(\f{i^2+10ij+9j^2}p\r)\r]_{0\ls i,j\ls p-1}=0.$$
Note that Sun stated in [S19, Remark 4.9] that $(10,9)_p=0$ if and only if $[10,9]_p=0$.
\qed

\medskip

Let $\mathbb{F}_q$ be a finite field of order $q$. A polynomial $P(x)\in \mathbb{F}_q[x]$
is called a \emph{permutation polynomial} if $P$ is bijective
as a function on $\mathbb{F}_q$. If $\chi$ is a nontrivial multiplicative character on $\mathbb F_q$
and $P(x)\in\mathbb F_q[x]$ is a permutation polynomial, then
$$\sum_{x\in \mathbb{F}_q} \chi(P(x))=\sum_{y\in \mathbb F_q}\chi(y)=0,$$
and also
$$\sum_{x\in \mathbb{F}_q^*} \chi(P(x))=0$$
provided that $P(0)=0$.

\begin{theorem}\label{prop:Dickson}
 Let $q>1$ be an odd prime power and let $m\in\Z^+$ with $\gcd(m,q^2-1)=1$.
Let  $\chi$ be a nontrivial
multiplicative character on $\mathbb F_q$ with $\chi(-1)=1$.
For the polynomial
 \begin{equation}\label{4.7}Q_m(x,a):=\sum_{i=0}^{(m-1)/2}\frac{m}{m-i}\binom{m-i}{i}(-a)^ix^{(m-1)/2-i}
 \end{equation}
 with $a\in\mathbb F_q^*$, we have
  $$\operatorname{dim}(\operatorname{Ker}(M(Q_m(x,a),\chi)))\gs 2.$$
  Moreover, if the character $\chi^{m}$ is nontrivial,
 then
  $$\operatorname{dim}(\operatorname{Ker}(M_0(Q_m(x,a),\chi)))\gs 2.$$
 \end{theorem}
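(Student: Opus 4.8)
The plan is to recognize $Q_m(x,a)$ as the ``odd part'' of the Dickson polynomial of the first kind and then to feed the resulting character-sum identity into Lemma 4.2. Recall that the Dickson polynomial $D_m(x,a)$ is given by
\[
D_m(x,a)=\sum_{i=0}^{\lfloor m/2\rfloor}\frac{m}{m-i}\binom{m-i}{i}(-a)^i x^{m-2i}.
\]
First I would note that $\gcd(m,q^2-1)=1$ forces $m$ to be odd, since $q$ is odd and hence $q^2-1$ is even; thus $\lfloor m/2\rfloor=(m-1)/2$ and every exponent $m-2i$ is odd. Writing $x^{m-2i}=x\,(x^2)^{(m-1)/2-i}$ and comparing with \eqref{4.7} yields the key identity
\[
D_m(x,a)=x\,Q_m(x^2,a)\qquad\text{in }\mathbb{F}_q[x].
\]
In particular $\deg Q_m=(m-1)/2=:n$, so $2n+1=m$, which matches the hypothesis $\chi^{2n+1}=\chi^m$ appearing in Lemma 4.2(ii) and in the ``moreover'' clause of the present theorem.

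Next I would verify the two character-sum conditions \eqref{4.3} of Lemma 4.2. A direct computation from the homogeneity of the coefficients gives the scaling relation $Q_m(gx^2,a)=g^{(m-1)/2}Q_m(x^2,a/g)$ for every $g\in\mathbb{F}_q^*$, and combining this with the key identity (applied with parameter $a/g$) produces
\[
x\,Q_m(gx^2,a)=g^{(m-1)/2}\,x\,Q_m(x^2,a/g)=g^{(m-1)/2}\,D_m(x,a/g).
\]
Consequently
\[
\sum_{x\in\mathbb{F}_q}\chi\!\left(x\,Q_m(gx^2,a)\right)=\chi\!\left(g^{(m-1)/2}\right)\sum_{x\in\mathbb{F}_q}\chi\!\left(D_m(x,a/g)\right)
\]
for each $g\in\mathbb{F}_q^*$, the case $g=1$ handling the first sum in \eqref{4.3} and any fixed nonsquare $g$ handling the second.

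The crucial input is the classical criterion (due to Nöbauer; see Lidl and Niederreiter's \emph{Finite Fields}) that for $b\in\mathbb{F}_q^*$ the Dickson polynomial $D_m(x,b)$ is a permutation polynomial of $\mathbb{F}_q$ precisely when $\gcd(m,q^2-1)=1$. Since $a/g\in\mathbb{F}_q^*$ and $\gcd(m,q^2-1)=1$, the polynomial $D_m(x,a/g)$ permutes $\mathbb{F}_q$, so by the observation on permutation polynomials recorded just before the theorem,
\[
\sum_{x\in\mathbb{F}_q}\chi\!\left(D_m(x,a/g)\right)=\sum_{y\in\mathbb{F}_q}\chi(y)=0,
\]
as $\chi$ is nontrivial. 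Hence $\sum_{x\in\mathbb{F}_q}\chi(x\,Q_m(gx^2,a))=0$ for all $g\in\mathbb{F}_q^*$, so both equalities in \eqref{4.3} hold. Applying Lemma 4.2(i) with $P(x)=Q_m(x,a)$ gives $\operatorname{dim}(\operatorname{Ker}(M(Q_m(x,a),\chi)))\ge 2$, and when $\chi^m=\chi^{2n+1}$ is nontrivial, Lemma 4.2(ii) gives $\operatorname{dim}(\operatorname{Ker}(M_0(Q_m(x,a),\chi)))\ge 2$.

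The hard part will be pinpointing the Dickson connection and invoking the permutation criterion with the correct hypothesis: the sharp condition is $\gcd(m,q^2-1)=1$, which is strictly stronger than the $\gcd(m,q-1)=1$ governing the monomial $x^m=D_m(x,0)$, and it is exactly what guarantees that $D_m(\cdot,a/g)$ permutes $\mathbb{F}_q$ for the \emph{nonzero} parameter $a/g$. Once the identity $D_m(x,a)=x\,Q_m(x^2,a)$ and the scaling relation are in place, the rest is the routine verification that the character sums vanish and a verbatim appeal to Lemma 4.2, so no further obstacle remains.
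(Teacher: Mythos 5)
Your proposal is correct and follows essentially the same route as the paper: identify $xQ_m(x^2,a)$ as the Dickson polynomial $D_m(x,a)$, use the scaling relation $Q_m(gx^2,a)=g^{(m-1)/2}Q_m(x^2,ag^{-1})$ together with the classical criterion that $D_m(x,b)$ permutes $\mathbb F_q$ when $\gcd(m,q^2-1)=1$ and $b\in\mathbb F_q^*$ to get the vanishing character sums, and then invoke Lemma 4.2. Your write-up merely makes explicit two details the paper leaves implicit (that $m$ must be odd, and that $\deg Q_m=(m-1)/2$ so $\chi^{2n+1}=\chi^m$), so there is nothing to correct.
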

\begin{proof} Let $a\in\mathbb{F}_q$.
 It is a classical result (cf. \cite[pp.\,355-357]{LN}) that
 the Dickson polynomial $D_m(x,a):=xQ_m(x^2,a)$ is a permutation polynomial on $\mathbb{F}_q$.
 For any $g\in\mathbb F_q^*$, as $Q_m(gx^2,a)=g^{(m-1)/2}Q_m(x^2,ag^{-1})$, the polynomial
 $xQ_m(gx^2,a)$ is also a permutation polynomial on $\mathbb F_q$. Thus
 \begin{equation}\label{4.8}\sum_{x\in\mathbb F_q}\chi(xQ_m(gx^2,a))=0\quad\t{for all}\ g\in \mathbb F_q^*.
 \end{equation}
 Combining this with Lemma 4.2, we immediately obtain the desired results.
\end{proof}

\medskip
\noindent{\it Proof of Theorem 1.4(ii)}.  Let $p$ be any prime with $p\eq13,17\pmod{20}$.
Then $\gcd(5,p^2-1)=1$. Let $\chi$
be the quadratic character of $\mathbb F_p=\Z/p\Z$ with $\chi(x+p\Z)=(\f xp)$ for all $x\in\Z$.
Then $\chi(-1)=1$ since $p\eq1\pmod4$. Clearly  $\chi^5=\chi$ is nontrivial and $Q_5(x,-1)=x^2+5x+5$.
Applying Theorem 4.2, we get that
$$[5,5]_p=\det\l[\l(\f{i^2+5ij+5j^2}p\r)\r]_{0\ls i,j\ls p-1}=0.$$
This concludes the proof. \qed

Note that actually our method to prove Theorem 1.4 yields a stronger result stated after Theorem 1.4 in Section 1.

\section{A sufficient condition for $\sum_{x=0}^{p-1}(\f{ax^5+bx^3+cx}p)=0$}
\setcounter{lemma}{0}
\setcounter{theorem}{0}
\setcounter{corollary}{0}
\setcounter{remark}{0}
\setcounter{equation}{0}

For an odd prime power $q>1$, we let $\chi_q$ denote the quadratic multiplicative character on the finite field $\mathbb{F}_q$.

Let $p\eq1\pmod4$ be a prime and let $a$ be a nonzero element of $\mathbb F_p$. If $\chi_p(a)=1$,
then we define $\sqrt a$ as an element $\al\in\mathbb F_p$ with $\al^2=a$.
When $\chi_p(a)=-1$, the finite field $\mathbb F_{p^2}\cong \mathbb F_p[x]/(x^2-a)$ contains an element
$\al$ with $\al^2=a$, and we denote such an $\al\in\mathbb F_{p^2}$ by $\sqrt{a}$.

\begin{theorem}
  \label{thm:elliptic}
Let $p\equiv 1\pmod{4}$ be a prime and let $a,b,c$ be nonnzero elements of the field $\mathbb{F}_p$. Let $q$ be $p$ or $p^2$ according as $\chi_p(ac)$ is $1$ or $-1$, and set
$$\gamma =\frac{b+2\sqrt{ac}}{16\sqrt{ac}}\in\mathbb F_q.$$
 Let $N$ be the number of $\mathbb{F}_q$-points on the affine curve
$y^2=x^4+x^2+\gamma$. If $N\equiv -1 \pmod{p}$, then
$$
\sum_{x\in \mathbb{F}_p} \chi_p (ax^5+bx^3+cx) = 0.
$$
\end{theorem}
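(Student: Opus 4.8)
The plan is to read the character sum as a Frobenius trace on a genus-two curve and then exploit that curve's extra automorphisms. Since $p\equiv1\pmod4$ we have $\chi_p(-1)=1$, so the summand $\chi_p(ax^5+bx^3+cx)$ is even in $x$; dropping the vanishing term $x=0$ gives
$$\sum_{x\in\mathbb{F}_p}\chi_p(ax^5+bx^3+cx)=2\sum_{x=1}^{(p-1)/2}\chi_p(x)\,\chi_p(ax^4+bx^2+c).$$
Write $S$ for this sum and let $C\colon y^2=ax^5+bx^3+cx$ be the associated genus-two curve over $\mathbb{F}_p$. Counting affine points together with the single point at infinity gives $\#C(\mathbb{F}_p)=p+1+S$, so $S=-\operatorname{tr}(\operatorname{Frob}_p\mid H^1)$, and it suffices to show this trace vanishes. (If $b^2=4ac$ the quartic is a perfect square and $S=0$ directly, so I assume $b^2\ne4ac$.)

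Next I would bring in the symmetries of $C$. The map $\sigma\colon(x,y)\mapsto(-x,iy)$ with $i=\sqrt{-1}\in\mathbb{F}_p$ is an $\mathbb{F}_p$-automorphism of order $4$ whose square is the hyperelliptic involution, so $\operatorname{Jac}(C)$ acquires an action of $\mathbb{Z}[i]$. More usefully, $C$ also carries the involution $\tilde\rho\colon(x,y)\mapsto(m/x,\ \kappa\,y/x^3)$ with $m^2=c/a$ and $\kappa^2=mc/a$; this is defined exactly over $\mathbb{F}_q$, which is precisely why $q=p$ when $\chi_p(ac)=1$ and $q=p^2$ when $\chi_p(ac)=-1$. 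A direct computation with the $\tilde\rho$-invariant functions $x+m/x$ and a suitable combination involving $y$ should identify the quotient $E:=C/\langle\tilde\rho\rangle$ with the elliptic curve $y^2=x^4+x^2+\gamma$, the specific value $\gamma=\frac{b+2\sqrt{ac}}{16\sqrt{ac}}$ being exactly what falls out of the reduction (it depends only on the scaling invariant $b^2/(ac)$, up to the choice of $\sqrt{ac}$). Producing this isomorphism with the correct $\gamma$ is the computational heart of the argument.

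With the degree-two map $C\to E$ in hand, over $\mathbb{F}_q$ the Jacobian satisfies $\operatorname{Jac}(C)\sim E\times E'$, and the order-four symmetry forces the two factors to share a Frobenius trace, so $\operatorname{tr}(\operatorname{Frob}_q\mid H^1(C))=2\,\operatorname{tr}(\operatorname{Frob}_q\mid H^1(E))$. Because $x^4+x^2+\gamma$ has square leading coefficient, the smooth model of $E$ has two rational points at infinity, whence $\#E(\mathbb{F}_q)=N+2$ and $a_q(E):=q+1-\#E(\mathbb{F}_q)=q-1-N$; thus the hypothesis $N\equiv-1\pmod p$ is equivalent to $p\mid a_q(E)$. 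In the split case $q=p$ this finishes the proof at once: here $\tilde\rho,E,E'$ are all defined over $\mathbb{F}_p$ and $S=-2a_p(E)=2(N-p+1)$, so $N\equiv-1\pmod p$ gives $2p\mid S$, while the Weil bound $|S|\le4\sqrt p<2p$ (valid for $p\ge5$) forces $S=0$.

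The inert case $q=p^2$ is where I expect the real difficulty, since now $E$ and the splitting live only over $\mathbb{F}_{p^2}$, and the identity above controls the $\mathbb{F}_{p^2}$-sum $S_2=-\operatorname{tr}(\operatorname{Frob}_p^2\mid H^1(C))=2(N-p^2+1)$ rather than $S$ itself. My plan is to descend through $\operatorname{Frob}_p^2$: using that the $\mathbb{Z}[i]$-action makes $\operatorname{Jac}(C)$ isogenous over $\mathbb{F}_p$ to a product of elliptic curves with a common Frobenius trace $t=-S/2\in\mathbb{Z}$ and norm $p$, one obtains $a_{p^2}(E)=t^2-2p$; then $N\equiv-1\pmod p$ forces $p\mid t^2$, hence $p\mid t$, and $|t|\le2\sqrt p<p$ gives $t=0$, i.e. $S=0$. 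The delicate point—the step I expect to need the most care—is justifying this $\mathbb{F}_p$-splitting, equivalently that the Frobenius eigenvalues on the $i$-eigenspace of $\sigma_*$ are complex conjugate so that $t\in\mathbb{Z}$ and the determinant is exactly $p$ (ruling out the a priori possibility of a non-real $t$ with $|t|^2=2p$). Once that is secured, the divisibility and the Weil bound close both cases uniformly.
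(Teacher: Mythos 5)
Your route is genuinely different from the paper's: the paper never looks at Jacobians or Frobenius eigenvalues, but instead computes both character sums modulo $p$ as truncated hypergeometric polynomials (Lemmas~\ref{lem:Ap} and~\ref{lem:Bq}), links the two polynomials by the algebraic identity of Lemma~\ref{lem:fg}, and finishes with the trivial bound $|A_p|<p$. Your proposal, however, has a genuine gap, and its central identity is false precisely in the case you call the real difficulty. The involution $\tilde\rho(x,y)=(m/x,\kappa y/x^3)$ needs $\kappa^2=m^3$ where $m^2=c/a$, so its field of definition is governed by whether $c/a$ is a \emph{fourth} power, not a square. When $\chi_p(ac)=-1$ one has $m^{p-1}=-1$, hence $\chi_{p^2}(m)=(-1)^{(p+1)/2}=-1$ because $p\equiv1\pmod{4}$; thus $\kappa\notin\mathbb{F}_{p^2}$, the involution is defined only over $\mathbb{F}_{p^4}$, and over $\mathbb{F}_{p^2}$ the Frobenius interchanges the two elliptic quotients. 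Consequently $S_2=\sum_{x\in\mathbb{F}_{p^2}}\chi_{p^2}(ax^5+bx^3+cx)$ vanishes identically in the inert case: substituting $x\mapsto mx$ and then $x\mapsto 1/x$, and using that every element of $\mathbb{F}_p^*$ is a square in $\mathbb{F}_{p^2}$, gives $S_2=\chi_{p^2}(m)\,S_2=-S_2$. So your identity $S_2=2(N-p^2+1)$, that is $S_2=-2a_{p^2}(E)$, fails whenever $a_{p^2}(E)\neq0$. A concrete counterexample: for $p=5$ and $(a,b,c)=(1,1,2)$ we have $\chi_5(ac)=-1$ and $A_5=4\neq0$; since Theorem~\ref{thm:elliptic} is true, its hypothesis must fail here, so $a_{25}(E)=24-N\not\equiv0\pmod{5}$ is nonzero, yet $S_2=0$. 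In the inert case the quantity your splitting controls therefore carries no information about $S$ at all.

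The fallback you sketch for the inert case is also unjustified at its crucial step: a $\mathbb{Z}[i]$-action defined over $\mathbb{F}_p$ does not force $\operatorname{Jac}(C)$ to split over $\mathbb{F}_p$ (Honda--Tate theory produces simple abelian surfaces over $\mathbb{F}_p$ whose endomorphism algebra is a quartic CM field containing $\mathbb{Q}(i)$), and the formula $a_{p^2}(E)=t^2-2p$ presupposes in addition that $E$ --- which is defined only over $\mathbb{F}_{p^2}$, since $\gamma\notin\mathbb{F}_p$ in the inert case --- descends to one of those hypothetical $\mathbb{F}_p$-factors. Two further inaccuracies: even in the split case your identity $S=-2a_p(E)$ needs $c/a$ to be a fourth power (otherwise the same Galois swap forces $S=0$ regardless of $a_p(E)$), and $C/\langle\tilde\rho\rangle$ agrees with the Jacobian of $y^2=x^4+x^2+\gamma$ only up to a $2$-isogeny and a quadratic twist. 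The way to salvage your approach is to argue only with twist- and descent-insensitive notions: $N\equiv-1\pmod{p}$ means $p\mid a_q(E)$, i.e.\ $E$ is supersingular; your quotient computation then makes $\operatorname{Jac}(C)$ geometrically isogenous to a product of supersingular elliptic curves, so every Frobenius eigenvalue is $\sqrt{p}$ times a root of unity, whence $S^2/p\in\mathbb{Z}$, so $p\mid S$, and the trivial bound $|S|\le p-1$ gives $S=0$ in both cases at once. That argument can be completed, but it is not the proof you wrote, and it is in any event far heavier than the paper's elementary computation.
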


For the sake of convenience, for an odd prime $p$ we
introduce the following two polynomials over $\mathbb{F}_p$:
\begin{equation}
 \label{eq:deff}
 f(z) =  1 + \sum_{k=1}^{ \lfloor (p-1)/8 \rfloor} \prod_{j=0}^{k-1}
 \frac{(8j+1)(8j+5)}{4(j+1)(4j+3)} z^k,
\end{equation}
and
 \begin{equation}
 \label{eq:defg}
  g(z)=1 + \sum_{k=1}^{ \lfloor (p-1)/4 \rfloor}
 \frac{(4k-1)!!}{4^k (k!)^2 } z^k.
\end{equation}

\begin{lemma}
 \label{lem:Ap}
Let $p$ be a prime with $p\eq1\pmod 4$, and let $a,b,c\in \mathbb{F}_p\setminus\{0\}$.
Define
$$A_p=\sum_{x\in\mathbb F_p}\chi_p(ax^5+bx^3+cx).$$
Viewing $A_p \pmod{p}$ as
an element of $\mathbb{F}_p$, we have
$$
  A_p \pmod{p} =
   -\binom{(p-1)/2}{(p-1)/4}b^{(p-1)/4} (a^{(p-1)/4}+c^{(p-1)/4})
    f\l(\f{ac}{b^2}\r).
$$
Consequently, $A_p=0$ if $(a^{-1}c)^{(p-1)/4}=-1$ or $f(ac/b^2)=0$.
\end{lemma}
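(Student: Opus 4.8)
The plan is to reduce the character sum to a power sum modulo $p$ by Euler's criterion, exactly as in Lemma 2.1, and then extract the surviving monomials. Since $\chi_p(t)\eq t^{(p-1)/2}\pmod p$ for every $t\in\F_p$ (both sides vanish at $t=0$), writing $m=(p-1)/2$ we have
$$A_p\eq\sum_{x\in\F_p}\l(ax^5+bx^3+cx\r)^{m}=\sum_{x\in\F_p}x^{m}\l(ax^4+bx^2+c\r)^{m}\pmod p.$$
Expanding $(ax^4+bx^2+c)^m$ by the multinomial theorem produces monomials $\bi m{i,j,k}a^ib^jc^kx^{4i+2j+m}$ with $i+j+k=m$, and summing over $x$ via \eqref{eq:sumofpowers} kills every term whose exponent $4i+2j+m$ is not divisible by $p-1=2m$, each surviving term contributing the factor $-1$ (note $4i+2j+m\gs m\gs1$). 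Since $0\ls4i+2j\ls4m$, the exponent lies in $[m,5m]$, so the only possibilities are $4i+2j+m\in\{2m,4m\}$; here $p\eq1\pmod4$ is used, as it makes $m$ even and $(p-1)/4$ an integer, so that $4i+2j=m$ and $4i+2j=3m$ are solvable.

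For the contribution of $4i+2j=m$ I parametrize by $i$, giving $j=(p-1)/4-2i$ and $k=(p-1)/4+i$ with $0\ls i\ls\lfloor(p-1)/8\rfloor$, and factor $b^{(p-1)/4}c^{(p-1)/4}$ out of $a^ib^jc^k$ to leave $(ac/b^2)^i$. The heart of the proof is to recognize $\sum_i\bi m{i,j,k}(ac/b^2)^i$ as $\bi{(p-1)/2}{(p-1)/4}f(ac/b^2)$. I establish this by comparing consecutive coefficients: with $c_i=\bi m{i,(p-1)/4-2i,(p-1)/4+i}$ one has $c_0=\bi{(p-1)/2}{(p-1)/4}$ and
$$\f{c_i}{c_{i-1}}=\f{((p-1)/4-2i+2)((p-1)/4-2i+1)}{i((p-1)/4+i)},$$
and reducing modulo $p$ by means of $(p-1)/4\eq-1/4\pmod p$ collapses this to $\f{(8i-7)(8i-3)}{4i(4i-1)}$, which is precisely the ratio of the $z^i$- and $z^{i-1}$-coefficients of $f$ in \eqref{eq:deff}. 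Hence this contribution equals $\bi{(p-1)/2}{(p-1)/4}b^{(p-1)/4}c^{(p-1)/4}f(ac/b^2)$.

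For the contribution of $4i+2j=3m$ I use the symmetry of the multinomial coefficient under $(i,j,k)\mapsto(k,j,i)$: this substitution sends $4i+2j=3m$ to $4i+2j=m$ and interchanges the exponents of $a$ and $c$, so this contribution is the previous one with $a$ and $c$ swapped, namely $\bi{(p-1)/2}{(p-1)/4}b^{(p-1)/4}a^{(p-1)/4}f(ac/b^2)$ (using $f(ca/b^2)=f(ac/b^2)$). Adding the two and the overall sign $-1$ gives the asserted formula. The two consequences then follow at once: if $(a^{-1}c)^{(p-1)/4}=-1$ then $c^{(p-1)/4}=-a^{(p-1)/4}$, so $a^{(p-1)/4}+c^{(p-1)/4}=0$; and if $f(ac/b^2)=0$ the whole expression vanishes.

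I expect the coefficient-matching in the middle paragraph to be the main obstacle: one must set up the multinomial ratio correctly and check that the reduction $(p-1)/4\eq-1/4\pmod p$ turns it into the rational function defining $f$, while also making the ranges of summation (governed by $\lfloor(p-1)/8\rfloor$) and the $a\leftrightarrow c$ symmetry line up exactly with \eqref{eq:deff}.
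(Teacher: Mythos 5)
Your proof follows essentially the same route as the paper's: Euler's criterion, multinomial expansion, the orthogonality relation \eqref{eq:sumofpowers} isolating the exponents $2m$ and $4m$, and identification of the two surviving coefficient sums with $\binom{(p-1)/2}{(p-1)/4}f(ac/b^2)$ times $c^{(p-1)/4}$ and $a^{(p-1)/4}$ respectively. Your computations check out: the ratio $c_i/c_{i-1}$ reduces modulo $p$ to $\frac{(8i-7)(8i-3)}{4i(4i-1)}$, which is indeed the ratio of consecutive coefficients of $f$ in \eqref{eq:deff}, so your consecutive-ratio matching is just a repackaging of the paper's direct product computation; likewise your $(i,j,k)\mapsto(k,j,i)$ symmetry is the paper's observation that the two surviving sums differ by swapping $a$ and $c$.

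The one step you cannot dismiss with ``follow at once'' is the final deduction. Your displayed formula determines $A_p$ only modulo $p$, whereas the stated consequence is the exact integer equality $A_p=0$. You need the (easy but necessary) observation that the $x=0$ term vanishes, so $A_p$ is a sum of at most $p-1$ values in $\{-1,0,1\}$ and hence $|A_p|\ls p-1<p$; only with this bound does $A_p\eq0\pmod p$ force $A_p=0$. The paper records exactly this bound in the first line of its proof, and your write-up should do the same.
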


\begin{proof} As $A_p=\sum_{x\in \mathbb F_p\sm\{0\}}\chi_p(ax^5+bx^3+cx)$, we have $|A_p|<p$. So,
the second assertion in Lemma \ref{lem:Ap} follows from the first one.

Now we come to prove the first assertion. With the help of \eqref{eq:sumofpowers}, in $\mathbb F_p$ we have
\begin{align*}
A_p\pmod p =& \sum_{x\in \mathbb{F}_p} (ax^5+bx^3+cx)^{(p-1)/2} \\
=& \sum_{k_5+k_3+k_1=(p-1)/2} \frac{((p-1)/2)!}{k_5!k_3!k_1!} a^{k_5} b^{k_3} c^{k_1} \sum_{x=0}^{p-1} x^{5k_5+3k_3+k_1} \\
=&  - \sum_{\genfrac{}{}{0pt}{2}{k_5+k_3+k_1=(p-1)/2}{5k_5+3k_3+k_1=p-1}}
    \frac{((p-1)/2)!}{k_5!k_3!k_1!} a^{k_5} b^{k_3} c^{k_1}
\\&    - \sum_{\genfrac{}{}{0pt}{2}{k_5+k_3+k_1=(p-1)/2}{5k_5+3k_3+k_1=2(p-1)}}
    \frac{((p-1)/2)!}{k_5!k_3!k_1!} a^{k_5} b^{k_3} c^{k_1}.
\end{align*}
(Note that if $k_1,k_3,k_5$ are nonnegative integers with $k_1+k_3+k_5=(p-1)/2$ then
$k_1+3k_3+5k_5\ls 5(k_1+k_3+k_5)<3(p-1).$) Thus,
\begin{align*}
&A_p\pmod p
\\=& - \sum_{\genfrac{}{}{0pt}{2}{k_5+k_3+k_1=(p-1)/2}{k_3+2k_5=(p-1)/4}}
    \frac{((p-1)/2)!}{k_5!k_3!k_1!} a^{k_5} b^{k_3} c^{k_1}
    - \sum_{\genfrac{}{}{0pt}{2}{k_5+k_3+k_1=(p-1)/2}{k_3+2k_1=(p-1)/4}}
    \frac{((p-1)/2)!}{k_5!k_3!k_1!} a^{k_5} b^{k_3} c^{k_1}
    \\
 =&  - \sum_{k=0}^{\lfloor(p-1)/8\rfloor}
    \frac{((p-1)/2)!}{k!((p-1)/4-2k)!((p-1)/4+k)!} a^k b^{(p-1)/4-2k} c^{(p-1)/4+k} \\
&- \sum_{k=0}^{\lfloor (p-1)/8\rfloor}
    \frac{((p-1)/2)!}{((p-1)/4+k)!((p-1)/4-2k)!k!} a^{(p-1)/4+k} b^{(p-1)/4-2k} c^{k}
  \\
=& -\binom{(p-1)/2}{(p-1)/4}b^{(p-1)/4} (c^{(p-1)/4}+a^{(p-1)/4}) \\
&\times
  \( 1 + \sum_{k=1}^{\lfloor (p-1)/8\rfloor}
    \prod_{i=0}^{2k-1} \l(\f{p-1}4-i\r)\cdot  \prod_{j=1}^{k}
    \frac{1}{((p-1)/4+j)} \cdot\frac{1}{k!} \l(\f{ac}{b^2}\r)^k \)
   \\
=&-\binom{(p-1)/2}{(p-1)/4}b^{(p-1)/4} (a^{(p-1)/4}+c^{(p-1)/4})
   f\l(\f{ac}{b^2}\r)
\end{align*}
as desired.

\end{proof}

\begin{lemma}
\label{lem:extension}
Let $p$ be an odd prime and let $q=p^n$ with $n\in\Z^+$.
For any polynomial
$$
 H(x)=\sum_{k=0}^{2(p-1)} c_k x^k \in \mathbb F_q[x],
$$
we have
\begin{equation}
   \sum_{x\in \mathbb{F}_q} H(x)^{1+p+\cdots+p^{n-1}} = - c_{p-1}^{1+p+\cdots+p^{n-1}} - c_{2(p-1)}^{1+p+\cdots+p^{n-1}}.
\label{eq:sumzFq}
\end{equation}
\end{lemma}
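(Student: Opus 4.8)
The plan is to expand $H(x)^{1+p+\cdots+p^{n-1}}$ as an explicit polynomial in $x$ by means of the Frobenius map and then sum it term by term over $\mathbb{F}_q$. Write $N=1+p+\cdots+p^{n-1}=(q-1)/(p-1)$ for the exponent in question. Since $\mathbb{F}_q$ has characteristic $p$, raising to the $p^i$-th power is a ring homomorphism, so $H(x)^{p^i}=\sum_{k=0}^{2(p-1)}c_k^{p^i}x^{kp^i}$, and therefore
\[
H(x)^N=\prod_{i=0}^{n-1}\l(\sum_{k=0}^{2(p-1)}c_k^{p^i}x^{kp^i}\r).
\]
Expanding the product indexes its monomials by tuples $(k_0,\ldots,k_{n-1})$ with $0\ls k_i\ls 2(p-1)$: such a tuple contributes the monomial $\l(\prod_{i=0}^{n-1}c_{k_i}^{p^i}\r)x^{E}$ with exponent $E=\sum_{i=0}^{n-1}k_ip^i$.

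First I would record the $\mathbb{F}_q$-analogue of \eqref{eq:sumofpowers}: for $E\gs0$ one has $\sum_{x\in\mathbb{F}_q}x^E=-1$ when $E>0$ and $(q-1)\mid E$, and $\sum_{x\in\mathbb{F}_q}x^E=0$ otherwise (the case $E=0$ giving $\sum_x 1=q=0$ in $\mathbb{F}_q$). Summing the expansion above over $x\in\mathbb{F}_q$ term by term, only those tuples whose exponent $E$ is a positive multiple of $q-1$ survive, each contributing $-\prod_i c_{k_i}^{p^i}$. Because each digit satisfies $0\ls k_i\ls 2(p-1)$, the exponent lies in the range $0\ls E\ls (2p-2)\sum_{i=0}^{n-1}p^i=2(q-1)$, so the only positive multiples of $q-1$ that can occur are $E=q-1$ and $E=2(q-1)$.

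The crux of the argument, and the step I expect to be the main obstacle, is to show that each of these two target exponents has a \emph{unique} admissible representation. For $E=2(q-1)$ this is immediate, since $2(q-1)$ is the maximal value of $E$ and is attained only when every $k_i=2(p-1)$. For $E=q-1=p^n-1$ I would argue by reduction modulo $p$: from $\sum_i k_ip^i\eq-1\pmod p$ we get $k_0\eq p-1\pmod p$, and since $0\ls k_0\ls 2p-2$ this forces $k_0=p-1$; subtracting and dividing by $p$ reduces the identity to $\sum_{i=0}^{n-2}k_{i+1}p^i=p^{n-1}-1$, and induction on $n$ then yields $k_i=p-1$ for every $i$. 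The point to watch here is that allowing digits as large as $2p-2$ rather than only $p-1$ does not manufacture a second representation, precisely because the competing residue class representative $2p-1$ exceeds the bound $2p-2$.

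Finally I would assemble the two surviving contributions. Using $\sum_{i=0}^{n-1}p^i=N$, the tuple $k_i=2(p-1)$ contributes $-\prod_{i=0}^{n-1}c_{2(p-1)}^{p^i}=-c_{2(p-1)}^{\,1+p+\cdots+p^{n-1}}$, and the tuple $k_i=p-1$ contributes $-\prod_{i=0}^{n-1}c_{p-1}^{p^i}=-c_{p-1}^{\,1+p+\cdots+p^{n-1}}$. Adding these two terms produces exactly the right-hand side of \eqref{eq:sumzFq}, which completes the proof.
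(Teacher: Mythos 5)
Your proposal is correct and takes essentially the same approach as the paper: Frobenius expansion of $H(x)^{1+p+\cdots+p^{n-1}}$, the power-sum formula over $\mathbb{F}_q$, and then showing that only the exponents $q-1$ and $2(q-1)$ survive, each with a unique digit representation. The sole (cosmetic) difference is in the uniqueness step: the paper centers the digits as $k_i=p-1+t_i$ and bounds $\bigl|\sum_i t_ip^i\bigr|$ by $q-1$, whereas you use maximality for $2(q-1)$ and a mod-$p$ induction for $q-1$; both amount to the same carrying argument.
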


\begin{proof} As the multiplicative group $\mathbb F_q\setminus\{0\}$ is cyclic,
similar to (\ref{eq:sumofpowers}), for each $s=0,1,2,\ldots$ we have
$$
   \sum_{x\in \mathbb{F}_q} x^s =
   \begin{cases}
      -1&\mbox{if}\ s\in (q-1)\Z^+, \\
       0&\mbox{otherwise},
    \end{cases}
$$
where we treat $0^0$ as $1$ when $s=0$. Note also that
$$H(x)^{p^i}=
\sum_{k=0}^{2(p-1)} c_k^{p^i} x^{kp^i}
$$
for all integers $i\gs0$. Thus
$$
   \sum_{x\in \mathbb F_q}H(x)^{1+p+\cdots+p^{n-1}}  =
   \sum_{x\in \mathbb{F}_q} \prod_{i=0}^{n-1} \biggl( \sum_{k_i=0}^{2(p-1)} c_{k_i}^{p^i} x^{k_ip^i} \biggr)
    = - {\sum\nolimits_{k_0,\dots,k_{n-1}}^{\ast}} \prod_{i=0}^{n-1} c_{k_i}^{p^i}
$$
where $\sum^\ast$ means that the sum is taken over all $k_0,\dots,k_{n-1}\in\{0,1,\ldots,2p-2\}$ subject to the condition
\begin{equation}
\label{eq:q-1div}
k_0 +k_1p+\cdots+k_{n-1}p^{n-1}\in(q-1)\Z^+.
\end{equation}

Write $k_i=p-1+t_i$, where
$-(p-1)\ls t_i \ls p-1$. Then
$$\sum_{i=0}^{n-1}k_ip^i= q-1+\sum_{i=0}^{n-1}t_ip^i.$$
Note that
$$
   \biggl| \sum_{i=0}^{n-1} t_i p^i \biggr| \ls q-1,
$$
and the equality is possible only if $t_0=\cdots=t_{n-1}=p-1$ (i.e., $k_0=\cdots=k_{n-1}=2(p-1)$)
or $t_0=\cdots=t_{n-1}=-(p-1)$ (i.e., $k_0 +k_1p+\cdots+k_{n-1}p^{n-1} = 0$).
Since $|t_i|<p$, if $\sum_{i=0}^{n-1} t_i p^i =0$ then we obtain step by step that
$t_0=\cdots=t_{n-1}=0$ (i.e., $k_0=\cdots=k_{n-1}=p-1$).

Combining the above, we finally obtain \eqref{eq:sumzFq}.
\end{proof}

\begin{lemma}
\label{lem:Bq}
Let $p$ be an odd prime and let $q=p^n$ with $n\in\Z^+$.
 Let $\alpha,\beta,\gamma\in \mathbb{F}_q\setminus\{0\}$ and set
$$B_q=\sum_{x\in \mathbb{F}_q} \chi_q (\alpha x^4+\beta x^2+\gamma).$$
Viewing $B_q \pmod{p}$ as an element of $\mathbb{F}_q$, we have
\begin{equation}\label{Bqequal}
 B_q \pmod{p} =  - \chi_q(\alpha) - \chi_q(\beta) g\l(\f{\alpha\gamma}{\beta^2}\r)^{1+p+\cdots+p^{n-1}}.
\end{equation}
\end{lemma}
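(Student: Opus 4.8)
The plan is to mimic the strategy already used for $A_p$ in Lemma \ref{lem:Ap}, but now over the extension field $\mathbb{F}_q$, with Lemma \ref{lem:extension} playing the role that \eqref{eq:sumofpowers} played there. First I would invoke Euler's criterion: since $\chi_q$ is the quadratic character, $\chi_q(y)=y^{(q-1)/2}$ holds in $\mathbb{F}_q$ for every $y$ (including $y=0$, where both sides vanish). Hence, viewing $B_q$ modulo $p$ as an element of $\mathbb{F}_q$,
$$B_q \pmod p = \sum_{x\in\mathbb{F}_q}(\alpha x^4+\beta x^2+\gamma)^{(q-1)/2}.$$
The key observation is the factorization of the exponent: writing $N=1+p+\cdots+p^{n-1}$, we have $q-1=(p-1)N$, so $(q-1)/2=\tfrac{p-1}{2}N$. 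Setting $H(x)=(\alpha x^4+\beta x^2+\gamma)^{(p-1)/2}$, which is a polynomial in $\mathbb{F}_q[x]$ of degree exactly $2(p-1)$ (as $\alpha\neq0$), the sum becomes $\sum_{x\in\mathbb{F}_q}H(x)^N$, precisely the shape handled by Lemma \ref{lem:extension}.

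Next I would apply Lemma \ref{lem:extension} to conclude $B_q \pmod p = -c_{p-1}^{\,N}-c_{2(p-1)}^{\,N}$, where $c_{p-1}$ and $c_{2(p-1)}$ are the coefficients of $x^{p-1}$ and $x^{2(p-1)}$ in $H(x)$. Both are read off from the multinomial expansion
$$H(x)=\sum_{i+j+k=(p-1)/2}\frac{((p-1)/2)!}{i!\,j!\,k!}\alpha^i\beta^j\gamma^k\,x^{4i+2j}.$$
For the top coefficient, $4i+2j=2(p-1)$ together with $i+j+k=(p-1)/2$ forces $i=(p-1)/2$, $j=k=0$, giving $c_{2(p-1)}=\alpha^{(p-1)/2}$; raising to the $N$-th power yields $c_{2(p-1)}^{\,N}=\alpha^{(q-1)/2}=\chi_q(\alpha)$, matching the first term. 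For the middle coefficient, $4i+2j=p-1$ with $i+j+k=(p-1)/2$ forces $i=k$ and $j=(p-1)/2-2i$, so $i$ runs over $0\le i\le\lfloor(p-1)/4\rfloor$ and
$$c_{p-1}=\beta^{(p-1)/2}\sum_{i=0}^{\lfloor(p-1)/4\rfloor}\frac{((p-1)/2)!}{(i!)^2\,((p-1)/2-2i)!}\left(\frac{\alpha\gamma}{\beta^2}\right)^i.$$

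The crux is then the arithmetic identity, modulo $p$, between this multinomial coefficient and the coefficient of $g$ in \eqref{eq:defg}. I would reduce $\frac{((p-1)/2)!}{((p-1)/2-2i)!}=\prod_{s=0}^{2i-1}\left(\frac{p-1}{2}-s\right)$ modulo $p$ by substituting $\frac{p-1}{2}\equiv-\frac12$, which gives $\prod_{s=0}^{2i-1}\bigl(-\frac{2s+1}{2}\bigr)=\frac{1}{4^i}\,(4i-1)!!$ (the product of the first $2i$ odd numbers). Dividing by $(i!)^2$ recovers exactly the $z^i$-coefficient of $g$, so $c_{p-1}=\beta^{(p-1)/2}\,g(\alpha\gamma/\beta^2)$ in $\mathbb{F}_q$; raising to the $N$-th power gives $c_{p-1}^{\,N}=\beta^{(q-1)/2}\,g(\alpha\gamma/\beta^2)^N=\chi_q(\beta)\,g(\alpha\gamma/\beta^2)^N$. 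Substituting both coefficients into $-c_{p-1}^{\,N}-c_{2(p-1)}^{\,N}$ yields \eqref{Bqequal}. I expect the main obstacle to be purely bookkeeping: confirming that only the two stated powers of $x$ survive through Lemma \ref{lem:extension}, and correctly tracking the factor $(4i-1)!!/4^i$ when passing from the factorial ratio to $g$. Conceptually the argument is a direct extension-field analogue of the Lemma \ref{lem:Ap} computation.
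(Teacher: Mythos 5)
Your proposal is correct and follows essentially the same route as the paper's own proof: reduce $B_q$ via Euler's criterion to $\sum_{x\in\mathbb{F}_q}H(x)^{1+p+\cdots+p^{n-1}}$ with $H(x)=(\alpha x^4+\beta x^2+\gamma)^{(p-1)/2}$, apply Lemma~\ref{lem:extension}, and identify $c_{2(p-1)}=\alpha^{(p-1)/2}$ and $c_{p-1}=\beta^{(p-1)/2}g(\alpha\gamma/\beta^2)$ from the multinomial expansion. Your explicit reduction of $\frac{((p-1)/2)!}{((p-1)/2-2i)!}$ to $(4i-1)!!/4^i$ modulo $p$ is exactly the (implicit) step by which the paper matches $c_{p-1}$ to the coefficients of $g$ in \eqref{eq:defg}, so there is nothing to fix.
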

\begin{proof}  Write
$$H(x):=(\alpha x^4+\beta x^2+\gamma )^{(p-1)/2}=\sum_{k=0}^{2(p-1)}c_kx^k.$$
In view of Lemma~\ref{lem:extension}, we have
\begin{equation}\label{Bq1}\begin{aligned}
  B_q\pmod p=& \sum_{x\in \mathbb{F}_q} (\alpha x^4+\beta x^2+\gamma )^{(q-1)/2}=\sum_{x\in \mathbb F_q}H(x)^{1+p+\cdots+p^{n-1}}
  \\=&-c_{p-1}^{1+p+\cdots+p^{n-1}}-c_{2(p-1)}^{1+p+\cdots+p^{n-1}}.
\end{aligned}\end{equation}

Clearly,
\begin{equation}\label{Bq2}c_{2(p-1)}^{1+p+\cdots+p^{n-1}}=\alpha^{(q-1)/2} =\chi_q(\alpha).
\end{equation}
 Note also that
\begin{align*}
 c_{p-1}=& \sum_{\genfrac{}{}{0pt}{2}{k_4+k_2+k_0=(p-1)/2}{4k_4+2k_2=p-1}}
    \frac{((p-1)/2)!}{k_4!k_2!k_0!} \alpha^{k_4} \beta^{k_2} \gamma^{k_0} \\
=& \sum_{0\ls k \ls (p-1)/4} \frac{((p-1)/2)!}{((p-1)/2-2k)!(k!)^2} \alpha^k \beta^{((p-1)/2-2k)} \gamma^{k}  \\
=&\beta^{(p-1)/2}+ \beta^{(p-1)/2} \sum_{k=1}^{\lfloor (p-1)/4\rfloor} \prod_{j=0}^{2k-1} \l(\f{p-1}2-j\r) \cdot
     \frac{1}{(k!)^2} \l(\f{\alpha\gamma}{\beta^2}\r)^{k}  \\
=& \beta^{(p-1)/2} g\l(\f{\alpha\gamma}{\beta^2}\r)
\end{align*}
and hence
\begin{equation}\label{Bq3}
c_{p-1}^{1+p+\cdots+p^{n-1}} =\chi_q(\beta)g\l(\f{\alpha\gamma}{\beta^2}\r)^{1+p+\cdots+p^{n-1}}.
\end{equation}

Combining \eqref{Bq1} with \eqref{Bq2} and \eqref{Bq3}, we immediately obtain the desired \eqref{Bqequal}.
\end{proof}

Now we study further properties of the polynomials $f$ and $g$ defined by (\ref{eq:deff}) and (\ref{eq:defg}). They may be viewed as truncated versions of certain hypergeometric series.

\begin{lemma}
\label{lem:diffeq}
Let  $p$ be an odd prime and let $q=p^n$ with $n\in\Z^+$.

{\rm (i)} A polynomial $u\in \mathbb{F}_q[z]$ with $\deg u \ls \lfloor (p-1)/4 \rfloor$  satisfies
the differential equation
\begin{equation}
\label{eq:diffeqg}
  (4z-16z^2) u'' +(4-32z)u'-3u =0
\end{equation}
if and only if $u=cg$ for some $c\in \mathbb{F}_q$.

{\rm (ii)} Suppose that $p\equiv 1 \pmod{4}$. 
Then a polynomial $v\in \mathbb{F}_q[z]$ with $\deg v \ls \lfloor (p-1)/8 \rfloor$
satisfies the differential equation
\begin{equation}
\label{eq:diffeqf}
 (16 z-64 z^2) v'' +(12-112z) v'-5v =0
\end{equation}
if and only if $v=cf$ for some $c\in \mathbb{F}_q$.
\end{lemma}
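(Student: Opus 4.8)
The plan is to convert each of the two differential equations into a two-term recurrence for the coefficients of a prospective polynomial solution, and then to read off both existence and uniqueness from that recurrence. Denote by $g_k$ and $f_k$ the coefficients appearing in \eqref{eq:defg} and \eqref{eq:deff}. First I would write $u=\sum_{k\gs0}a_kz^k$, substitute into \eqref{eq:diffeqg}, and equate the coefficient of each $z^m$ to zero; collecting terms gives the relation
$$4(m+1)^2a_{m+1}=(4m+1)(4m+3)a_m\qquad(m\gs0),$$
with the convention $a_k=0$ for $k$ outside the admissible range. The same manipulation for \eqref{eq:diffeqf}, applied to $v=\sum_kb_kz^k$, produces
$$4(m+1)(4m+3)b_{m+1}=(8m+1)(8m+5)b_m\qquad(m\gs0).$$
Comparing consecutive coefficient ratios shows that $(g_k)$ and $(f_k)$ satisfy these recurrences with $g_0=f_0=1$, so $g$ and $f$ are the expected solutions.

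For the ``only if'' direction I would exploit that the factor multiplying the higher-index coefficient is invertible in $\mathbb F_q$ on the whole relevant range. For \eqref{eq:diffeqg} and a solution of degree at most $d_0:=\lfloor(p-1)/4\rfloor$, only indices $0\ls m\ls d_0$ occur, and $1\ls m+1\ls d_0<p$ forces $p\nmid m+1$, so $4(m+1)^2$ is a unit; hence for $0\ls m\ls d_0-1$ the recurrence determines $a_{m+1}$ from $a_m$ and yields $a_k=a_0g_k$ for all $k\ls d_0$, i.e. $u=a_0g$. For \eqref{eq:diffeqf} the argument is identical once I check that $4(m+1)(4m+3)$ is a unit for $0\ls m\ls e_0-1$ with $e_0:=\lfloor(p-1)/8\rfloor$; this follows from $m+1\ls e_0<p$ together with $4m+3\ls 4e_0-1<p$, and then $b_k=b_0f_k$ gives $v=b_0f$.

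The hard part will be the top-degree coefficient, which is exactly where the degree cutoffs and the congruence hypotheses become indispensable. In \eqref{eq:diffeqg} the coefficient of $z^{d_0}$ equals $-(4d_0+1)(4d_0+3)a_{d_0}$ (because $a_{d_0+1}=0$), so it must vanish; the key point I would establish is that $(4d_0+1)(4d_0+3)\eq0\pmod p$ for every odd prime $p$, since $d_0=\lfloor(p-1)/4\rfloor$ makes $4d_0+1=p$ when $p\eq1\pmod4$ and $4d_0+3=p$ when $p\eq3\pmod4$. Thus the top equation holds automatically for any $a_0$, which simultaneously confirms that $g$ itself solves \eqref{eq:diffeqg} (the ``if'' direction) and that the whole family $\{cg\}$ survives. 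For \eqref{eq:diffeqf} I would check the analogous boundary factor $(8e_0+1)(8e_0+5)$: under $p\eq1\pmod4$ one has $8e_0+1=p$ if $p\eq1\pmod8$ and $8e_0+5=p$ if $p\eq5\pmod8$, whereas for $p\eq3\pmod4$ neither factor is divisible by $p$ --- which explains why $f$ then fails to solve \eqref{eq:diffeqf} and why the hypothesis $p\eq1\pmod4$ cannot be dropped. Granting these two congruences, both equivalences follow by linearity.
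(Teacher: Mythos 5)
Your proposal is correct and rests on the same core computation as the paper's proof: extracting the coefficient of $z^m$ from the differential operator, which in the paper appears only once, as the statement that the output of the operator applied to a putative solution of degree $d<\lfloor(p-1)/4\rfloor$ (obtained after subtracting a suitable multiple of $g$) has nonzero top coefficient $-(4d+1)(4d+3)c_d$, whereas in your write-up it appears as the full two-term recurrence $4(m+1)^2a_{m+1}=(4m+1)(4m+3)a_m$, resp.\ $4(m+1)(4m+3)b_{m+1}=(8m+1)(8m+5)b_m$. Your packaging is slightly more self-contained: forward propagation of the recurrence simultaneously forces $u=a_0g$ (resp.\ $v=b_0f$) and, through the congruences $(4d_0+1)(4d_0+3)\equiv 0\pmod{p}$ and $(8e_0+1)(8e_0+5)\equiv 0\pmod{p}$, supplies an explicit proof of the ``if'' direction that the paper dismisses as a straightforward verification, while also pinpointing exactly where the hypothesis $p\equiv 1\pmod{4}$ enters in part (ii).
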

\begin{proof}
It is straightforward to verify that $u=g$ and $v=f$ satisfy (\ref{eq:diffeqg}) and (\ref{eq:diffeqf}) respectively. So, the ``if" parts of (i) and (ii) are easy.

Now we prove the ``only if" part of (i). If a polynomial $u\in \mathbb F_q[z]$ with $\deg u\le\lfloor (p-1)/4\rfloor$ satisfies (\ref{eq:diffeqg}), then there is a constant $c\in\mathbb F_q$ such that
$\tilde u=u-cg$ is a solution of (\ref{eq:diffeqg}) with $\deg \tilde u<\lfloor (p-1)/4\rfloor$.
 Thus,
it suffices to show that (\ref{eq:diffeqg}) has no nonzero solution $u=c_d z^d + \cdots + c_0$ with
$\deg u =d <  \lfloor (p-1)/4 \rfloor$. In fact, the coefficient of $z^d$
 in $(4z-16z^2) u'' +(4-32z)u'-3u$ is $-(4d+1)(4d+3)c_d\neq 0$ provided $d <  \lfloor (p-1)/4 \rfloor$.

Similarly, we can show the ``only if" part of (ii).
\end{proof}

\begin{lemma}
 \label{lem:fg} Let $p=4n+1$ be a prime with $n\in\Z^+$. Then
$$
 -(2n)! (n!)^2  g(z)= (16z-2)^{n} f\left(\frac{1}{(16z-2)^2}\right).
$$
\end{lemma}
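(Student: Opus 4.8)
The plan is to exhibit the right-hand side as a polynomial solution of the \emph{same} differential equation that characterizes $g$, and then to fix the scalar by matching leading coefficients. Write $p=4n+1$, so that $n=(p-1)/4=\lfloor(p-1)/4\rfloor$, and set
$$R(z):=(16z-2)^nf\l(\f1{(16z-2)^2}\r).$$
First I would observe that $R$ is genuinely a polynomial of degree $n$: expanding $f$ (which has degree $\lfloor n/2\rfloor$ in its argument) gives $R(z)=\sum_{k=0}^{\lfloor n/2\rfloor}a_k(16z-2)^{n-2k}$ with $a_0=1$, and each exponent $n-2k$ is a nonnegative integer that equals $n$ only when $k=0$. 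Hence $\deg R=n$ with leading coefficient $16^n$. The objective is then to show that $R$ satisfies \eqref{eq:diffeqg}; granting this, Lemma \ref{lem:diffeq}(i) forces $R=cg$ for some constant $c$, and only the value of $c$ remains to be found.

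The heart of the argument, and the step I expect to be most delicate, is the verification that
$$(4z-16z^2)R''+(4-32z)R'-3R=0.$$
For this I would substitute $s=16z-2$ and $w=s^{-2}$, so that $R=s^nf(w)$, while $4z-16z^2=-(s^2-4)/16$ and $4-32z=-2s$. Differentiating $R=s^nf(w)$ twice (with $w'=-32s^{-3}$) and feeding the results into the operator above, one rewrites everything as a combination of $f,f',f''$ whose coefficients are Laurent polynomials in $s$. The coefficient of $s^nf$ collapses to $-(4n+1)(4n+3)$, which is $\eq0\pmod p$ precisely because $p=4n+1$; this cancellation removes the top power of $s$. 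Factoring out $s^{n-6}$ from what survives and replacing $s^{-2}$ by $w$, the bracketed factor becomes $w^{-2}$ times
$$(256w^2-64w)f''+\l((64n-32)-128(2n-3)w\r)f'+64n(n-1)f.$$
Now using $4n\eq-1\pmod p$, i.e. $n\eq-\f14\pmod p$, the three coefficients reduce to $-4(16w-64w^2)$, $-4(12-112w)$ and $-4(-5)$, so the whole expression is $-4$ times the left-hand side of \eqref{eq:diffeqf}. Since $f$ solves \eqref{eq:diffeqf} by Lemma \ref{lem:diffeq}(ii), it vanishes, and hence so does $(4z-16z^2)R''+(4-32z)R'-3R$. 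The care needed here is that the manipulation must be read as an identity of rational functions over $\F_p$, and that \emph{both} $p=4n+1$ and $n\eq-1/4$ are essential to fold the $n$-dependent coefficients onto the operator of \eqref{eq:diffeqf}.

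It remains to pin down $c$ in $R=cg$. Comparing the coefficients of $z^n$ on both sides gives $16^n=c\,\f{(4n-1)!!}{4^n(n!)^2}$; here the leading coefficient of $g$ is nonzero modulo $p$ because $(4n-1)!!$ is a product of odd numbers all smaller than $p$, so indeed $\deg g=n$ and
$$c=\f{64^n(n!)^2}{(4n-1)!!}.$$
To finish I would verify $c\eq-(2n)!(n!)^2\pmod p$, which after cancelling $(n!)^2$ amounts to $64^n\eq-(2n)!\,(4n-1)!!\pmod p$. Writing $(4n-1)!!=(4n)!/(2^{2n}(2n)!)$ turns the right side into $-(4n)!/4^n$, and Wilson's theorem $(4n)!=(p-1)!\eq-1\pmod p$ gives $-(2n)!\,(4n-1)!!\eq 4^{-n}\pmod p$. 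On the other hand $64^n4^n=2^{8n}=2^{2(p-1)}\eq1\pmod p$ by Fermat's little theorem, so $64^n\eq4^{-n}$; comparing the two shows $64^n\eq-(2n)!\,(4n-1)!!$ and therefore $c\eq-(2n)!(n!)^2\pmod p$. This establishes the claimed identity $-(2n)!(n!)^2\,g(z)=(16z-2)^nf\l(1/(16z-2)^2\r)$.
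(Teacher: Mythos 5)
Your proposal is correct and follows essentially the same route as the paper's proof: both exhibit $(16z-2)^{n} f\left((16z-2)^{-2}\right)$ as a degree-$n$ polynomial solution of \eqref{eq:diffeqg}, invoke Lemma~\ref{lem:diffeq}, and then fix the scalar by comparing leading coefficients via Wilson's theorem and Fermat's little theorem. The only differences are presentational: you spell out the ``direct computation'' with \eqref{eq:diffeqf} that the paper leaves to the reader, and you carry $16^n$ and $64^n$ symbolically where the paper immediately uses $2^{p-1}\equiv 1\pmod p$ to normalize the leading coefficient to $1$.
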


\begin{proof}
Clearly, $u=(16z-2)^{n} f((16z-2)^{-2})$ is a polynomial of degree $n=(p-1)/4$ with the leading coefficient $1$.
A direct computation based on \eqref{eq:diffeqf} shows that $u$ satisfies \eqref{eq:diffeqg}.
Now we apply Lemma~\ref{lem:diffeq} and compare the leading terms of both sides.
Since
$$
  \frac{(p-2)!!}{4^{n}(n!)^2} = \frac{(p-1)!}{2^{p-1} (2n)! (n!)^2}
  \equiv - \frac{1}{(2n)! (n!)^2} \pmod{p},
$$
we immediately get the desired result.
\end{proof}

\begin{proof}[Proof of Theorem~\ref{thm:elliptic}]
Since
$$
N=\sum_{x\in \mathbb F_q}(1+\chi_q(x^4+x^2+\gamma))=q+\sum_{x\in\mathbb{F}_q} \chi_q(x^4+x^2+\gamma),
$$
the assumption $N\equiv -1\pmod{p}$, together with Lemma~\ref{lem:Bq} in the case $\alpha=\beta=1$, implies that $g(\gamma)=0$. As $(16\gamma-2)^{-2}=ac/b^2$, we have
$f(ac/b^2)=0$ by Lemma~\ref{lem:fg}. Applying Lemma~\ref{lem:Ap} we obtain the desired result.
\end{proof}

\setcounter{conjecture}{0} \end{document}